\newtheorem{theorem}{Theorem}
\newtheorem{lemma}{Lemma}
\newtheorem{proposition}{Proposition}
\newtheorem{remark}{Remark}
\newcommand{\alert}[2][magenta]%
{{\color{#1}\mbox{[\hspace{-0.4ex}[}#2\mbox{]\hspace{-0.4ex}]}}}
\newcommand{\ostar}{\mathbin{\mathpalette\make@circled\star}}
\newcommand{\make@circled}[2]{%
  \ooalign{$\m@th#1\smallbigcirc{#1}$\cr\hidewidth$\m@th#1#2$\hidewidth\cr}%
}
\newcommand{\smallbigcirc}[1]{%
  \vcenter{\hbox{\scalebox{0.77778}{$\m@th#1\bigcirc$}}}%
}
\title{Completely regular codes with covering radius $1$ and the second eigenvalue in $3$-dimensional Hamming graphs}
\author{Ivan Mogilnykh, Anna Taranenko and Konstantin Vorob'ev
\footnote{The work was funded by the Russian Science Foundation (22-11-00266),
http://rscf.ru/project/22-11-00266/.

**The authors are with the Sobolev Institute of Mathematics, Novosibirsk, Russia.
}$^{\,\,\,,**}$}
\begin{document}
\maketitle
 
\begin{abstract} 
We obtain 
a classification of the completely regular codes with covering radius $1$ and the second eigenvalue in the Hamming graphs $H(3,q)$ up to $q$ and intersection array. Due to   works of Meyerowitz, Mogilnykh and Valyuzenich,  our result  completes  the classifications of completely regular codes   with covering radius $1$ and the second eigenvalue in the Hamming graphs $H(n,q)$ for any $n$ and completely regular codes with covering radius $1$  in $H(3,q)$.

\end{abstract}


\section{Introduction}
The purpose of this study is completing the classification of the completely regular codes in $H(n,q)$ with covering radius $1$ and the second eigenvalue that was initiated in \cite{MV}. Completely regular codes with covering radius $1$ are  known under various different names, e.g. equitable $2$-partitions, perfect $2$-colorings, and intriguing sets  \cite{BLP}, \cite{BKMTV}, \cite{MV}. The concept of a completely regular code, suggested by Delsarte \cite{Del73},  is traditionally considered in classical association schemes. For a survey on completely regular codes in  Hamming and Johnson graphs we refer to \cite{ZinRifBorg}. In \cite{BKMTV} a more specific case of  covering radius $1$ (equitable $2$-partitions) in Hamming graphs is studied in details.      

In recent years, several new approaches for studying special classes of completely regular codes in different association schemes were suggested. 
Some papers focus on codes of small strength \cite{Vor20},  \cite{FI}, \cite{MV},  other target  codes with minimum possible eigenvalue \cite{MV2} or linear codes of small covering radius \cite{BRZ2}.

We grade $q$-ary
completely regular codes by their strengths as 
 orthogonal arrays. It is well-known that the strength is determined by the largest eigenvalue of the code in the following manner. A completely regular code has strength $i$ if its maximum eigenvalue, different from the graph valency, has index $i+1$ as eigenvalue of  the Hamming graph \cite{Del73}. The completely regular codes of strength $0$ in the Hamming graphs were  classified up to isomorphism by Meyerowitz in \cite{Mey}.

We say that the position $j$ is {\it essential} for a code (set) $C$ in $H(n,q)$ if there are vertices from $C$ and the complement of $C$ different only in  $j$th position.  The eigenvalues of the graph $H(n,q)$  are $$\lambda_i(n,q) = n(q-1) - qi$$ with multiplicity ${n \choose i} (q-1)^i$, $i = 0, \ldots, n$ (see, for example, \cite{BCN}). We say that a code in $H(n,q)$ is {\it reduced}  if all its positions are essential.
The following fact is well-known, see e.g. \cite[Theorem 4.5]{BKMTV},   \cite[Proposition 33(ii)]{ZinRifBorg},  \cite[Lemma 2]{MV}. 
\begin{proposition}\label{P:3}  Let $C$ be a code in $H(n,q)$ with nonessential position  
$j$. The code $C$ is completely regular with covering radius $\rho$, intersection array $A$ and eigenvalues $\{\lambda_i(n,q):i\in I\}$ if and only if
the code in  $H(n-1,q)$ obtained by deleting $j$th position in the
tuples of $C$ is completely regular with  covering radius $\rho$, intersection array $A$ and eigenvalues $\{\lambda_i(n-1,q):i\in I\}$.

\end{proposition}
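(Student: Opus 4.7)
The plan is to exploit the product structure of $H(n,q)$ with respect to coordinate $j$. The fact that $j$ is nonessential means that, for every codeword $x\in C$ and every symbol $b$, the tuple obtained from $x$ by replacing the $j$th coordinate by $b$ again lies in $C$. Thus, after identifying $H(n,q)$ with the Cartesian product of $H(n-1,q)$ and $K_q$ (the second factor corresponding to position $j$), we can write $C=C'\times V(K_q)$, where $C'$ is the code in $H(n-1,q)$ obtained by deleting position~$j$ from the tuples of $C$.

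First I would compare the distance partitions. For any vertex $(x,a)$ of the product and any codeword $(c,b)\in C$, one has $d\bigl((x,a),(c,b)\bigr)=d(x,c)+[a\neq b]$, and since $(c,a)\in C$ this yields $d\bigl((x,a),C\bigr)=d(x,C')$. Therefore the distance class $D_i(C)$ equals $D_i(C')\times V(K_q)$ for every $i$, and the two codes share the same covering radius $\rho$.

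Next I would compute the intersection numbers. The neighbors of a vertex $(x,a)\in D_i(C)$ split into $(y,a)$ with $y\sim x$ in $H(n-1,q)$ and $(x,b)$ with $b\neq a$. The $q-1$ neighbors of the second type all lie in $D_i(C)$, while those of the first type distribute over the classes $D_k(C)$ exactly as the $H(n-1,q)$-neighbors of $x$ distribute over the $D_k(C')$. Consequently, $C$ is completely regular in $H(n,q)$ iff $C'$ is completely regular in $H(n-1,q)$, with identical $b_i$ and $c_i$. The diagonal parameters satisfy $a_i(C)=a_i(C')+(q-1)$, so under the usual convention whereby the intersection array $\{b_0,\ldots,b_{\rho-1};c_1,\ldots,c_\rho\}$ omits the $a_i$, the arrays of $C$ and $C'$ coincide.

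For the eigenvalue part, the tridiagonal intersection matrix of $C$ equals that of $C'$ plus $(q-1)I$, so their spectra differ by an additive shift of $q-1$. Since $\lambda_i(n,q)-\lambda_i(n-1,q)=q-1$ for every~$i$, the eigenvalue set of $C$ equals $\{\lambda_i(n,q):i\in I\}$ iff that of $C'$ equals $\{\lambda_i(n-1,q):i\in I\}$ with the same index set~$I$. I do not foresee any serious obstacle; the only delicate points are the convention that excludes the $a_i$ from the intersection array, and checking that the additive shift of the diagonal matches exactly the gap $\lambda_i(n,q)-\lambda_i(n-1,q)$.
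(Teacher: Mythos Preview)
Your argument is correct. Note, however, that the paper does not prove this proposition at all: it is stated as a well-known fact with references to \cite[Theorem~4.5]{BKMTV}, \cite[Proposition~33(ii)]{ZinRifBorg}, and \cite[Lemma~2]{MV}, and no proof is supplied. Your direct proof via the product decomposition $H(n,q)\cong H(n-1,q)\,\square\,K_q$, the identification $C=C'\times V(K_q)$, the identity $D_i(C)=D_i(C')\times V(K_q)$, and the resulting relation $M(C)=M(C')+(q-1)I$ is exactly the standard elementary argument one would expect for this statement, and all the steps you outline go through without difficulty. The two checks you flag as ``delicate'' are both fine: the paper's intersection array indeed omits the $\alpha_i$, and $\lambda_i(n,q)-\lambda_i(n-1,q)=\bigl(n(q-1)-qi\bigr)-\bigl((n-1)(q-1)-qi\bigr)=q-1$ as you need.
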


 Two constuctions of completely regular codes with covering radius $1$ were introduced in \cite{MV}.
One is based on alphabet liftings of two cycles in the Hamming graph $H(4,2)$, see also \cite{BKMTV}.
The idea behind the second method, called permutation switching construction is as follows. We choose a specially chosen completely regular code in $H(2,q)$ and 
add nonessential positions. Then  we apply coordinate permutations (switchings) to certain subcodes of this  code. The obtained code has from $3$ 
to $\frac{q}{2}+1$ essential positions so it is nonisomorphic to the original code with two  essential coordinates, while both codes have the same parameters.
The following theorem was proven for the completely regular codes  with the second eigenvalue, the details on both mentioned constructions below could be found  in \cite{MV}. 
\begin{theorem}\cite{MV}\label{T:reduce}
All reduced completely regular codes in $H(n,q)$ with covering radius $1$, second eigenvalue and at least four  essential positions are obtained by permutation switchings  or  alphabet liftings of a cycle in $H(4,2)$. The intersection arrays of these codes have only even entries.
\end{theorem}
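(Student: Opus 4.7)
The plan is to pin down the structure of $C$ by combining the equation that the second-eigenvalue hypothesis imposes on the intersection array with the local geometry of coordinate planes. Diagonalizing the quotient matrix of the equitable $2$-partition $\{C,\, H(n,q)\setminus C\}$ yields eigenvalues $n(q-1)$ and $n(q-1)-b_0-c_1$, so the hypothesis forces $b_0+c_1=q$; this rigid equation says that each vertex has only $q$ ``transitions'' between $C$ and its complement among its $n(q-1)$ neighbors, which is the working principle of the proof. I would then fix any vector $\vec{a}$ in $n-2$ of the coordinates and inspect the restriction $C_{ij}(\vec{a})\subseteq H(2,q)$ obtained by letting positions $i,j$ vary. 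The row and column sums of $C_{ij}(\vec{a})$ are controlled by $(b_0,c_1)$ and by the contribution from the frozen coordinates; a short case analysis using $b_0+c_1=q$ should confine each such 2D restriction to a short list of types---constant, row or column union, Latin-square-like, and (for $q=2$) the $H(4,2)$-cycle pattern.

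Next I would glue these restrictions across a third coordinate $k$. Using that $C$ has at least four essential positions, I expect to show that if some pair $\{i,j\}$ yields a row/column-union restriction, then the remaining coordinates must act by coordinate permutations on subcodes---this is exactly the permutation switching construction. If instead every pair gives the Latin-square-like pattern, propagating compatibility across triples should force $q=2$ and exhibit $C$ as an alphabet lifting of a cycle in $H(4,2)$. The evenness of the intersection array then follows from direct inspection of both constructions: permutation switching inherits $(b_0,c_1)$ from an $H(2,q)$ code in which these numbers are forced to be even by $b_0+c_1=q$ together with a divisibility argument on codeword counts, while lifted cycles of $H(4,2)$ automatically double both entries.

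The main obstacle will be the gluing/compatibility step: ruling out hybrid configurations where different coordinate pairs exhibit different 2D restriction types. In particular, one must show that a Latin-square pattern on positions $\{i,j\}$ cannot coexist with a row-union pattern on $\{i,k\}$ unless the parameters collapse to one of the two named constructions. I expect this to require tracing intersection numbers across several coordinate triples and exploiting the $n\geq 4$ hypothesis to generate enough constraints for a contradiction. By Proposition~\ref{P:3} one may reduce to the reduced setting throughout, so no essentiality is lost in the gluing argument, and the $H(4,2)$ outcome naturally explains why the binary-cycle source sits in dimension exactly four.
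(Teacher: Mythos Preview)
This theorem is not proved in the present paper; it is quoted from \cite{MV} and used as a black box to reduce the classification problem to $H(3,q)$. There is therefore no proof here to compare your proposal against.

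That said, your proposal contains a concrete arithmetic error at the outset. The nontrivial eigenvalue of a CRC with $\rho=1$ in a $k$-regular graph is $k-(\beta+\gamma)$ (Proposition~\ref{P:1}), so setting this equal to $\lambda_2(n,q)=n(q-1)-2q$ gives $\beta+\gamma=2q$, not $q$. This matters downstream: the ``transition budget'' per vertex is $2q$, not $q$, which loosens the rigidity you are counting on in the 2D case analysis. Your evenness sketch for permutation switching, based on $b_0+c_1=q$, is likewise built on the wrong identity (the correct reason is that in $H(2,q)$ with eigenvalue $-2$ one has $\gamma+\beta=2q$ and Proposition~\ref{P:2} forces $\gamma q/(2q)=\gamma/2$ to be an integer).

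More broadly, what you have written is a plan rather than a proof, and the step you flag as the ``main obstacle''---ruling out hybrid configurations across coordinate triples---is precisely where the work lies. The machinery that \cite{MV} develops (and that this paper reuses in Section~\ref{S:Deriv}) is close in spirit to your 2D-restriction idea: one shows that the derived functions $(\chi_C)_{i,u,v}$ on $H(2,q)$ must be crosses, strings, or identically zero (Lemma~\ref{l:orig_deriv}), and then controls how these patterns interact. But making that interaction argument rigorous across four or more essential positions requires the eigenfunction classification, not the informal row/column compatibility you describe; your outline gives no mechanism to exclude the hybrid cases beyond ``tracing intersection numbers,'' which is not enough.
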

The theorem above reduces the classification to the case of $H(3,q)$, which we tackle in this work. Our study shows that there are many nonisomorphic codes with the same parameters ($q$ and intersection array), so we narrow the goal to  classification up to parameters. Meanwhile, we still obtain complete description of these objects in certain cases.
 
 The paper is organized as follows. We provide necessary definitions and background results in Sections \ref{S:Def} and \ref{S:3}. Four constructions (Constructions A-D) of completely regular codes with new parameters are given in Section \ref{S:Constr}, two of which (Construction A and B) were known in the literature, while  Constructions C and D are new and  provide a rich family of new codes, sufficient to cover all open parameter cases. Construction D can be considered as a  generalization of the splitting construction II from~\cite{BKMTV} for $3$-dimensional Hamming graphs.

 As a simple consequence of Construction $A$ we characterize all completely regular codes for odd $q$ up to parameters (Proposition \ref{p:odd q}). 
 Construction C covers all codes of sizes at least $\frac{q^3}{4}$ in the Hamming graph $H(3,q)$ up to parameters for even $q$.
 In Section \ref{S:CRCsclique}, we show that  Construction D gives a complete description for completely regular codes with covering radius $1$ that are unions of disjoint cliques in $H(3,q)$. 
 
Furthermore, we make use of an eigenvalue technique  \cite{MV},  \cite{Vor20}, \cite{V1} in Section \ref{S:Deriv} and show that Construction D describes all completely regular codes with the second eigenvalue, even $q$, and odd entries in intersection array when the size of a code is less than a quarter of the order of the graph. This fact, combined with other constructions, is an essential ingredient of the  characterization of  completely regular codes with covering radius $1$ in $H(3,q)$ up to parameters given in Section 7.
The results of this work and Theorem \ref{T:reduce} imply  the description of the parameters of completely regular codes with covering radius $1$ and the second eigenvalue in $H(n,q)$ (Theorem \ref{T:Hnq}).

\section{Definitions}\label{S:Def}
Let $C$ be a {\it code} in a regular graph $\Gamma$, i.e. a set of its vertices. The complement of the set $C$ in $\Gamma$ is denoted by $\overline{C}$.
A vertex $x$ is in $C_i$ if the minimum of the distances between $x$ and the vertices of $C$ is $i$.
The maximum of these distances is called {\it the covering radius} of $C$ and is denoted by $\rho$. The  {\it distance partition} of the vertices of $\Gamma$ with respect to $C_0=C$ is  $\{C_i:i\in \{0,\ldots,\rho\}\}$.

A code $C$ is called {\it completely regular} (shortly, CRC) if there are numbers
$\alpha_0,\ldots,\alpha_{\rho}$,  $\beta_0,\ldots,\beta_{\rho-1}$, $\gamma_1,\ldots,\gamma_{\rho}$
such that for any $i$,  any vertex of $C_i$ is adjacent to exactly $\alpha_i$, $\beta_i$, and $\gamma_i$ vertices of $C_{i-1}$, $C_{i}$, and $C_{i+1}$
respectively. Note that  $\alpha_0,\ldots,\alpha_{\rho}$ can be found from the remaining numbers and the valency of the graph. Similarly to distance-regular graphs \cite{BCN}, the array 
  $\{\beta_0,\ldots,\beta_{\rho-1}; \gamma_1,\ldots,\gamma_{\rho}\}$ is called the {\it intersection array} of the completely regular code $C$.

For a completely regular code $C$ consider the three-diagonal $(\rho+1)\times(\rho+1)$ {\it intersection matrix} $M$
such that $M_{i,j}$ equals the number of vertices of $C_j$ adjacent to a fixed vertex of $C_i$.
Its  eigenvalues  are called the {\it eigenvalues of the completely regular code} $C$.

It is well-known (see e.g.
\cite[Theorem 4.5]{CDZ}) that the eigenvalues of the intersection matrix $M$ are necessarily eigenvalues of the adjacency
matrix of the  graph.

\begin{remark}
As all completely regular codes we consider below have covering radius $1$, throughout the paper we simplify the
notations for the intersection numbers. We denote $\gamma_1$ by $\gamma$ and $\beta_0$ by $\beta$.
\end{remark}

The following statement is well-known, see e.g. \cite{MV}.
\begin{proposition}\label{P:1} Let $C$ be a completely regular code with $\rho=1$ in a
$k$-regular graph $\Gamma$. Then $\alpha_0+\beta=\gamma+\alpha_1=k$, $|C|=|V(\Gamma)|\gamma/(\gamma+\beta)$, and the eigenvalues of the
code $C$ are  $k$ and  $k - (\gamma+\beta)$.   
\end{proposition}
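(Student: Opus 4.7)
The plan is to unpack the definition of complete regularity in the case $\rho=1$, where the distance partition has only two classes $C_0=C$ and $C_1=\overline{C}$, and then derive the three assertions in order.

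First I would establish the two relations $\alpha_0+\beta=k$ and $\gamma+\alpha_1=k$ just by counting the neighbors of a fixed vertex $x$. Any vertex $x\in C_0$ has $k$ neighbors in $\Gamma$, each of which is either in $C_0$ (there are $\alpha_0$ of them) or in $C_1$ (there are $\beta=\beta_0$ of them, since $\rho=1$ means there is no $C_2$ to contribute). The same count applied to $x\in C_1$ gives $\gamma+\alpha_1=k$.

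Next I would obtain the cardinality formula by double counting the edges between $C_0$ and $C_1$. From the $C_0$-side each vertex contributes $\beta$ such edges, giving $|C|\beta$; from the $C_1$-side each vertex contributes $\gamma$, giving $(|V(\Gamma)|-|C|)\gamma$. Equating the two expressions and solving yields $|C|=|V(\Gamma)|\gamma/(\gamma+\beta)$.

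Finally, for the eigenvalues I would write down the intersection matrix
\[
M=\begin{pmatrix}\alpha_0 & \beta\\ \gamma & \alpha_1\end{pmatrix}=\begin{pmatrix}k-\beta & \beta\\ \gamma & k-\gamma\end{pmatrix}.
\]
Observing that $M(1,1)^{T}=(k,k)^{T}$ immediately gives $k$ as one eigenvalue, and then the trace identity $\mathrm{tr}(M)=2k-\beta-\gamma$ forces the other eigenvalue to be $k-(\gamma+\beta)$. (Equivalently, one could compute the characteristic polynomial and factor it as $(\lambda-k)(\lambda-(k-\beta-\gamma))$.) There is no real obstacle here — every step is a short counting or $2\times 2$ linear-algebra argument — so the proof is essentially a direct verification once the definitions are in hand.
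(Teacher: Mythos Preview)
Your proof is correct in every detail. Note, however, that the paper does not actually prove this proposition: it simply states it as well-known with a reference to \cite{MV}, so there is no in-paper argument to compare against. Your direct verification---neighbor counting for the row-sum identities, double counting cross edges for $|C|$, and the trace/all-ones eigenvector computation for the spectrum of the $2\times 2$ matrix $M$---is exactly the standard elementary argument one supplies for this fact.
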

 
From the statement above, we see that the intersection matrix $M=\begin{pmatrix}\alpha_0 & \beta\\\gamma & \alpha_1\end{pmatrix}$ of a completely regular code with $\rho=1$ in a $k$-regular graph $\Gamma$ can be reconstructed given the element $\gamma$ of the  intersection array and its eigenvalue $\lambda \neq k$.   It is easy to see that a code is CRC with 
 $\rho=1$ if and only if its complement is CRC with $\rho=1$. In view of this fact, we  assume that $\gamma \leq  \beta$ and $|C|\leq \frac{|V(\Gamma)|}{2}$ throughout the paper (otherwise switch over $C$ and $\overline{C}$).

The vertex set of the {\it Hamming graph}
$H(n,q)$ is the Cartesian $n$th power of a set ${\cal A}$ of size
$q$, vertices $x$ and $y$ are adjacent if they  differ in
exactly one coordinate position.

For a given $a\in {\cal A}$ the set $\{x\in {\cal A}^n:x_i=a\}$ is called a  {\it hyperface} of direction $i$.
Any maximum clique of a Hamming graph $H(n,q)$ is a set of $q$ tuples having
distinct symbols in a fixed position $i$.  We refer to $i$ as the {\it codirection} of the clique to emphasize the dualistic nature of cliques and hyperfaces. We note the following  obvious property in the Hamming graphs.
\begin{equation}
\label{eq:obvious}
\mbox{A set of } k \mbox{ cliques of codirection } i \mbox{ meets any hyperface of direction } i \mbox{ in } k \mbox{ vertices.}
\end{equation}

The {\em Cartesian product} $\Gamma\square \Gamma'$ of graphs $\Gamma$ and $\Gamma'$ is a graph with the vertex set $V(\Gamma)\times V(\Gamma')$, and
any two vertices $(u,u')$ and $(v,v')$ are adjacent if and only if either
$u=v$ and $u'$ is adjacent to $v'$ in $\Gamma'$, or
$u'=v'$ and $u$ is adjacent to $v$ in $\Gamma$. We see that the Hamming graph $H(n,q)$ is the Cartesian $n$th power of the complete graph $H(1,q)$.

\section{Stochastic properties of CRCs with $\rho=1$ in the  Cartesian products of complete graphs}\label{S:3}

In this section we recall one of the most known properties of the completely regular codes in Hamming graphs which relate it to an orthogonal array of certain  "strength" \cite{Del73}. A similar  fact could be formulated for a more general class of graphs  such as the Cartesian product of  complete graphs with different orders, which are not necessarily distance-regular. In case of these graphs, we   rather use a general term "stochastic set" that finds similarity with a  consonant matrix class.

The maximal cliques of the graph $H(1,q)\square H(1,q')$ are ${\cal A}\times i$ and $j\times {\cal A}'$  for $i\in {\cal A}'$, $j\in {\cal A}$, where ${\cal A}$ and ${\cal A}'$ are the alphabet sets of  $H(1,q)$ and $H(1,q')$, respectively. We see that the CRCs   with the minimum eigenvalue in the graph $H(1,q)\square H(1,q')$ are evenly distributed by the maximal cliques of the graph.

\begin{proposition}\label{P:2}
1. \cite[Proposition 2]{MV} A code $C$ in the graph
$H(1,q)\square H(1,q')$ is completely regular with $\rho=1$ and eigenvalue $-2$ 
if and only if for any maximal clique $K$ the number $|K\cap C|$ depends only on  $|K|$. Moreover, for the parameter $\gamma$ of such a code $C$ we have that 
$\frac{|K\cap C|}{|K|}=\frac{\gamma}{q+q'}$ for any maximal clique $K$ of $H(1,q)\square H(1,q')$.

2. A completely regular code in the graph
$H(1,q)\square H(1,q')$ with  $\rho=1$, eigenvalue $-2$, and $\gamma$  exists if and only if  $\frac{\gamma}{q+q'}q$ is an integer, $0<\frac{\gamma}{q+q'}q< q$.
\end{proposition}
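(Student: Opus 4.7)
The plan for the forward direction is to invoke Part~1 directly. If $C$ is a completely regular code with the stated parameters, then for any maximum clique $K$ of size $q$ (a ``column'' of the form $\mathcal{A}\times\{i\}$) we have $|K\cap C|=\gamma q/(q+q')$, which must be a nonnegative integer bounded above by $|K|=q$. The strict bounds $0<\gamma q/(q+q')<q$ come from $C$ being nontrivial: the value $0$ would force $\gamma=0$ and hence $C=\emptyset$, while the value $q$ would force $\gamma=q+q'$, $\beta=0$, and $C$ equal to the whole vertex set. I would also note that integrality of $\gamma q/(q+q')$ automatically implies integrality of $\gamma q'/(q+q')=\gamma-\gamma q/(q+q')$, which will be convenient in the converse.

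For the reverse direction, the plan is to realize the code as an explicit subset of the $q\times q'$ grid. By Part~1, it suffices to exhibit $C$ such that $|K\cap C|=\gamma|K|/(q+q')$ for every maximal clique $K$. Setting $b:=\gamma q/(q+q')$ and $b':=\gamma q'/(q+q')$, this amounts to constructing a $0/1$ matrix of size $q\times q'$ with every column summing to $b$ and every row summing to $b'$. The compatibility condition $qb'=q'b$ holds automatically from the definitions, and such matrices always exist; a clean explicit candidate is the circulant that puts the ones of row $j$ in the positions $\{jb',jb'+1,\ldots,jb'+b'-1\}\pmod{q'}$, which by inspection has uniform row sums $b'$.

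The main obstacle is verifying that this circulant construction yields uniform column sums equal to $b$. This reduces to a counting argument on the arithmetic progression $\{jb'\bmod q':j=0,\ldots,q-1\}$: from $qb'=q'b$ one deduces that $q$ is a multiple of $q'/\gcd(b',q')$, so each residue modulo this quantity is hit the same number of times, forcing every column to be covered exactly $b$ times. If the bookkeeping becomes unpleasant, one can bypass it by appealing to the Gale--Ryser theorem, whose hypotheses collapse to $qb'=q'b$ in the uniform case. Beyond this elementary combinatorial step, no serious difficulty remains: Part~2 is essentially a divisibility corollary of Part~1, together with the observation that in a connected graph the existence of a CRC with covering radius $1$ forces both $C$ and its complement to be nonempty.
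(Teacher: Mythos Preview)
Your proof is correct and follows essentially the same strategy as the paper: necessity is read off from Part~1, and sufficiency is obtained by an explicit cyclic-shift construction in the $q\times q'$ grid. The paper takes a block $D\subset\{0,\dots,q-1\}$ of size $q\gamma/(q+q')$ and places $g^{ib}(D)$ in column $i$ for a $q$-cycle $g$, which is exactly the transposed version of your row-circulant; your sketch of the column-sum verification via the periodicity of $\{jb'\bmod q'\}$ is in fact more explicit than the paper's ``easy to see''.
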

\begin{proof} 2.
Necessity follows from the first statement.

Sufficiency. 
Let $g$
be a  permutation of order $q$ on the vertex set of $H(1,q)$.
Consider a  set $D$ of vertices of $H(1,q)$ of size $\frac{q\gamma}{q+q'}$ and its $q'$ images  $g^{i\frac{q\gamma}{q+q'}}(D)$ for $i=0,\ldots,q'-1$.
The desired CRC with $\rho=1$ in $H(1,q)\square H(1,q')$ is obtained as 
$$\{ (g^{i\frac{q\gamma}{q+q'}}(j),i):j\in D,i=0,\ldots,q'-1\}.$$
Indeed, it is easy to see that any maximal clique of size $q$ or $q'$ of the graph $H(1,q)\square H(1,q')$ contains either $\frac{q\gamma}{q+q'}$ or 
 $\frac{q'\gamma}{q+q'}$ vertices of $C$ and the result follows from the first statement.

\end{proof}

A code   $C$ in the graph $H(1,q)\square H(1,q')$ is called $(a,b)$-{\it stochastic} if any maximal clique in $H(1,q)\square H(1,q')$ of size $q$ contains $a$ vertices of $C$ and 
any maximal clique of size $q'$ contains $b$ vertices of $C$. We include the case $a=q$ here, i.e. when $C$ coincides with the vertex set of the graph. If $a<q$, in view of the above statement we see that $C$ is $(a,b)$-stochastic if and only if  $C$ is CRC with $\rho=1$ and eigenvalue $-2$; its parameter $\gamma$ is $a+b$, where $a=\frac{\gamma q}{q+q'}$ and $b=\frac{\gamma q'}{q+q'}$. 
In turn, an $(a,b)$-stochastic subset of  
$H(1,q)\square H(1,q')$ exists if and only if 

\begin{equation}\label{c:stochastic}aq'=bq.\end{equation}

In a similar fashion, we call a subset $C$ of $H(1,q)\square H(1,q')$ $(a,*)$-{\it stochastic}  or $(*,b)$-{\it stochastic}
if any maximal clique of size $q$ and  $q'$ respectively contains exactly $a$ and $b$ vertices from $C$ respectively. Obviously $(a,*)$- and $(*,b)$-stochastic set is  $(a,b)$-stochastic.

It is well-known that if the characteristic
function of a code in $H(n,q)$ is orthogonal to any
$\lambda_j(n,q)$-eigenfunction for all $j$ such that $1\leq j\leq
t$, then the code is a $t$-orthogonal array (see e.g.
\cite[Theorem 4.4]{Del73}). In particular, in the case when
$C$ is a CRC in $H(n,q)$ with
the second eigenvalue, the code $C$ is evenly distributed by the Hamming subgraphs $H(n-1,q)$. We have the following for $n=3$.

\begin{proposition}\label{p:OA}

1. \cite[Theorem 4.4]{Del73} Let $C$ be a completely regular code  with $\rho=1$, 
eigenvalue $\lambda_2(n,q)$, and $\gamma$ in $H(3,q)$. 
Then for
any $i\in \{1,2,3\}$, $a \in {\cal A}$ we have that
$|\{x\in C:x_i=a \}|=\gamma q/2.$

2. \cite{KMP}  A code $C$ is a completely regular code with
$\rho=1$ and  eigenvalue $\lambda_3(3,q)$ in $H(3,q)$ if and only if  
 any maximal clique contains the same number of vertices in $C$.
Such codes with the parameter $\gamma$ exist if and only if $\gamma$ is  divisible by $3$.

\end{proposition}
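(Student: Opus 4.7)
The plan is to route both parts through Delsarte's identification of completely regular code eigenvalues with orthogonal array strength: a CRC whose maximum nontrivial eigenvalue has index $t+1$ is a $t$-orthogonal array, as recalled in the introduction. The arithmetic in each case then reduces to Proposition~\ref{P:1}.

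For part 1, since $C$ has eigenvalues $\lambda_0(3,q)$ and $\lambda_2(3,q)$, its characteristic function is orthogonal to the $\lambda_1(3,q)$-eigenspace; by \cite[Theorem 4.4]{Del73}, $C$ is a strength-$1$ orthogonal array, that is, each hyperface $\{x : x_i = a\}$ contains $|C|/q$ codewords. Proposition~\ref{P:1} with $\gamma+\beta = \lambda_0(3,q) - \lambda_2(3,q) = 3(q-1) - (q-3) = 2q$ gives $|C| = q^3\gamma/(2q) = q^2\gamma/2$, so the hyperface count is $q\gamma/2$, as asserted.

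For part 2, note $\lambda_3(3,q) = -3$ and $\gamma+\beta = 3q$. The forward direction follows the same pattern: a CRC with $\rho=1$ and nontrivial eigenvalue $\lambda_3(3,q)$ is orthogonal to both the $\lambda_1(3,q)$- and $\lambda_2(3,q)$-eigenspaces, hence is a strength-$2$ orthogonal array, which in $H(3,q)$ is equivalent to the statement that every maximal clique meets $C$ in $|C|/q^2 = \gamma/3$ vertices. For the converse I would reverse the spectral argument: the clique condition places $\chi_C$ orthogonal to the $\lambda_1(3,q)$- and $\lambda_2(3,q)$-eigenspaces, so $\chi_C$ lies in the span of eigenvectors for $\lambda_0(3,q)$ and $\lambda_3(3,q)$; applying the adjacency matrix then yields a constant number of $C$-neighbors at every vertex of $C$ and a (possibly different) constant at every vertex of $\overline{C}$, certifying that $C$ is CRC with $\rho=1$ and nontrivial eigenvalue $\lambda_3(3,q)$. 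For existence, the identity $|C| = q^2\gamma/3$ forces $3 \mid \gamma$; conversely, for $\gamma \in \{3, 6, \ldots, 3(q-1)\}$ the set $C = \{(x_1,x_2,x_3) \in G^3 : x_1+x_2+x_3 \in S\}$, with $G$ any abelian group of order $q$ and $S \subseteq G$ of size $\gamma/3$, meets every maximal clique in exactly $\gamma/3$ vertices.

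The main obstacle is justifying the converse direction of part 2: turning the combinatorial clique condition cleanly into the spectral statement that $\chi_C$ is orthogonal to the $\lambda_1(3,q)$- and $\lambda_2(3,q)$-eigenspaces. This is most transparent via MacWilliams/Delsarte duality, or by observing that the indicator functions of maximal cliques, together with the constant function, span the orthogonal complement of the $\lambda_1(3,q) \oplus \lambda_2(3,q)$ eigenspace; once this identification is made, the remaining deduction of complete regularity is bookkeeping with Proposition~\ref{P:1} and the construction above disposes of existence.
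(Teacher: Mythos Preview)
The paper does not supply its own proof of this proposition; both parts are attributed to the literature (Delsarte for part~1, \cite{KMP} for part~2), with only the remark afterward that part~2 is the folklore characterisation of completely regular codes with minimum eigenvalue as sets evenly distributed over Delsarte cliques. Your sketch therefore goes beyond what the paper does, and its overall strategy---reducing to orthogonal-array strength via Delsarte and then doing the arithmetic through Proposition~\ref{P:1}---is sound. Part~1 is correct as written, and your forward direction and existence construction for part~2 are also correct.

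There is one slip in your treatment of the converse in part~2. You write that the indicator functions of maximal cliques, together with the constant, span the orthogonal complement of the $\lambda_1(3,q)\oplus\lambda_2(3,q)$ eigenspace; this is false. A clique indicator such as $\mathbf{1}_{\{a\}\times\{b\}\times{\cal A}}=e_a\otimes e_b\otimes\mathbf{1}$ decomposes entirely inside the $\lambda_0\oplus\lambda_1\oplus\lambda_2$ eigenspaces, and the span of all clique indicators is exactly $\lambda_0\oplus\lambda_1\oplus\lambda_2$ (dimension $3q^2-3q+1$), not $\lambda_0\oplus\lambda_3$. The correct deduction runs the other way: because the clique indicators span $\lambda_0\oplus\lambda_1\oplus\lambda_2$, having \emph{constant} inner product with all of them forces $\chi_C$ to be orthogonal to $\lambda_1\oplus\lambda_2$, hence $\chi_C\in\lambda_0\oplus\lambda_3$. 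With that correction the rest of your converse (applying the adjacency matrix to obtain constant $C$-neighbour counts on $C$ and on $\overline{C}$) goes through, and your alternative route via Delsarte duality avoids the issue entirely.
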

  The Proposition \ref{p:OA}.2 is a folklore property which essentially states that a set is a completely regular code with the covering radius $1$ and the minimum eigenvalue if and only if it is evenly distributed by the Delsarte cliques in the Delsarte clique graphs. A detailed proof could be found in \cite[Theorem 2(ii), Example 2]{KMP}, where this class of codes is shown to coincide with multifold MDS codes in Hamming graphs.

\section{Constructions of completely regular codes in $3$-dimensional Hamming graphs}\label{S:Constr}

{\bf Construction A (adding nonessential positions to a CRC in $H(2,q)$). }

\begin{proposition}\label{Construction:A}
For every CRC in $H(2,q)$   with $\rho=1$,  eigenvalue $\lambda_2(2,q)$, and $\gamma$ there is a CRC of $H(3,q)$ with $\rho = 1$   eigenvalue  $\lambda_2(3,q)$, and $\gamma$. For every even $\gamma$   there is a CRC  in $H(3,q)$ with $\rho=1$,   eigenvalue $\lambda_2(3,q)$
 and the parameter $\gamma$. \end{proposition}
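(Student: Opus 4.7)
The plan is to derive both statements by combining Proposition \ref{P:3} (addition of a nonessential position) and Proposition \ref{P:2} (existence of $(-2)$-eigenvalue CRCs in $H(1,q)\square H(1,q')$); no ingredient beyond these two seems required.

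For the first assertion, given a CRC $C\subseteq H(2,q)$ with $\rho=1$, eigenvalue $\lambda_2(2,q)$ and parameter $\gamma$, I would adjoin a trivial third coordinate and set
\[
C' := C \times {\cal A} = \{(c_1,c_2,a):(c_1,c_2)\in C,\ a\in {\cal A}\} \subseteq H(3,q).
\]
The third coordinate of $C'$ is nonessential by construction: for any fixed pair $(c_1,c_2)\in {\cal A}^2$, either all tuples $(c_1,c_2,a)$ lie in $C'$ or none do, so no edge in direction $3$ joins $C'$ to its complement. Proposition \ref{P:3} applied with $j=3$, $n=3$ and $I=\{0,2\}$ then gives that $C'$ is CRC with the same covering radius and intersection array as $C$, hence the same $\gamma$, and with eigenvalues $\lambda_0(3,q)$ and $\lambda_2(3,q)$, as required.

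For the second assertion I would invoke Proposition \ref{P:2}(2) with $q'=q$, since $H(2,q)=H(1,q)\square H(1,q)$. Its existence criterion specializes to $\gamma/2\in \mathbb{Z}$ and $0<\gamma/2<q$, i.e.\ $\gamma$ even with $0<\gamma<2q$. For every such $\gamma$ this produces a CRC in $H(2,q)$ with $\rho=1$, eigenvalue $\lambda_2(2,q)=-2$ and parameter $\gamma$, which the first part of the proposition then lifts to a CRC in $H(3,q)$ with the required parameters.

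The argument is essentially bookkeeping, and the only point needing attention is that Proposition \ref{P:3} preserves the eigenvalue \emph{index} $i$ rather than the numerical value: this is exactly what guarantees that $\lambda_2(2,q)=-2$ in $H(2,q)$ transforms into $\lambda_2(3,q)=q-3$ in $H(3,q)$, not into some other eigenvalue. Beyond this indexing remark I do not expect any genuine combinatorial obstacle.
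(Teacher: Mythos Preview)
Your proof is correct and follows essentially the same approach as the paper's own proof: invoke Proposition~\ref{P:2}.2 to obtain CRCs in $H(2,q)$ with any even $\gamma$ and eigenvalue $\lambda_2(2,q)=-2$, then append a nonessential coordinate and apply Proposition~\ref{P:3}. Your write-up is simply more explicit about the construction $C'=C\times{\cal A}$ and the eigenvalue-index bookkeeping, but the argument is the same.
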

\begin{proof}

By Proposition \ref{P:2}.2 there are CRCs in $H(2,q)$ with any even $\gamma$, $\rho=1$ and eigenvalue $\lambda_2(2,q)=-2$. Then we add a nonessential position in order to obtain a desired code in $H(3,q)$.
The statement follows from Proposition \ref{P:3}.
\end{proof}

\begin{proposition}\label{p:odd q}
Let $q$ be odd.
A CRC in $H(3,q)$ with $\rho=1$, eigenvalue $\lambda_2(3,q)$, and $\gamma$
exists if and only if 
$\gamma$ is a positive even integer. 
\end{proposition}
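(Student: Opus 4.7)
The plan is to read off necessity directly from a divisibility condition on $|C|$ and then appeal to Construction~A for sufficiency.

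First I would compute $\lambda_2(3,q) = 3(q-1) - 2q = q-3$, so by Proposition~\ref{P:1} one has $\gamma + \beta = k - \lambda_2(3,q) = 3(q-1) - (q-3) = 2q$, and hence
\[
|C| = \frac{|V(H(3,q))|\,\gamma}{\gamma+\beta} = \frac{q^3 \gamma}{2q} = \frac{q^2 \gamma}{2}.
\]
This is the key identity. The positivity of $\gamma$ is automatic since $C$ is a nonempty proper subset (because $\rho=1$ and $\gamma \neq 0$), so the only content in ``positive'' is to exclude $\gamma=0$.

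For the necessity direction, I would argue that since $q$ is odd, $q^2$ is odd, and $|C|$ is an integer; therefore $\gamma$ must be even. Under the standing convention $\gamma \leq \beta$ we also have $\gamma \leq q$, but we do not need this for the statement.

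For sufficiency, the work is already done by Proposition~\ref{Construction:A}: for every even $\gamma$ there is a CRC in $H(3,q)$ with $\rho=1$ and eigenvalue $\lambda_2(3,q)$ whose intersection parameter is $\gamma$ (obtained from a CRC in $H(2,q)$ with $\rho=1$ and eigenvalue $\lambda_2(2,q)=-2$ by adding a nonessential coordinate). This completes both directions.

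There is no real obstacle here --- the proposition is essentially a one-line arithmetic consequence of Proposition~\ref{P:1} combined with Construction~A. The only small point to watch is that $\gamma$ even is forced by parity of $q^2$ (which fails when $q$ is even, explaining why the odd case is so clean), and that the range of achievable $\gamma$ is covered entirely by Construction~A without further constraints beyond evenness.
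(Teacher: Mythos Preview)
Your proof is correct and follows essentially the same approach as the paper: use Proposition~\ref{P:1} to obtain $\gamma+\beta=2q$ and hence $|C|=q^2\gamma/2$, deduce that $\gamma$ is even from $q$ being odd, and invoke Construction~A for sufficiency. The paper's argument is identical in substance, just slightly terser.
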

\begin{proof} By Proposition \ref{P:1}, the size of code $C$ is 
$\frac{q^3\gamma}{\gamma+\beta}$ and
 $\lambda_2(3,q)=3q-3-(\gamma+\beta)=q-3$ is an  eigenvalue of the code $C$. So, we have
$\gamma+\beta=2q$, which given that $\frac{q^3\gamma}{\gamma+\beta}$ is integer and $q$ is odd implies that  $\gamma$ is necessarily even. On the other hand, for any  
positive even 
 $\gamma$ a desired code exists by Construction A.

\end{proof}

{\bf Construction B (alphabet lifting of perfect codes in $H(3,2)$).}
\begin{proposition}
    Let $q$ be even. Then there is a completely regular code  in $H(3,q)$ with $\rho=1$, eigenvalue $\lambda_2(3,q)$, and $\gamma=q/2$.
\end{proposition}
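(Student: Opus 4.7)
The plan is to realise the desired code as the ``alphabet lifting'' of the perfect code $\{000,111\}\subset H(3,2)$. Because $q$ is even, I would partition the alphabet $\mathcal{A}$ into two blocks $A_0,A_1$ of size $q/2$, let $\pi\colon\mathcal{A}\to\{0,1\}$ send $A_b$ to $b$, extend it coordinatewise to $\mathcal{A}^{3}\to\{0,1\}^{3}$, and set
$$C \;:=\; \pi^{-1}(\{000,111\}) \;=\; A_0^{3}\cup A_1^{3}.$$
Note $|C|=2(q/2)^{3}=q^{3}/4$, which already matches the ratio $\gamma/(\gamma+\beta)=(q/2)/(2q)=1/4$ predicted by Proposition~\ref{P:1} for a CRC with $\gamma=q/2$ and eigenvalue $\lambda_2(3,q)$.

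Next I would check the CRC property by a direct count. For $x\in A_0^{3}$ (the case $x\in A_1^{3}$ being symmetric), each of the three coordinates can be changed either to one of $q/2-1$ other elements of $A_0$, staying inside $C$, or to one of $q/2$ elements of $A_1$, leaving $C$; so $\beta=3q/2$ is constant on $C$. For $y\in\overline{C}$, after the swap $A_0\leftrightarrow A_1$ if necessary I may assume $y$ has exactly two coordinates in $A_0$ and one in $A_1$; then the only single-coordinate changes landing in $C$ replace the lone $A_1$-symbol by any of the $q/2$ symbols of $A_0$, so $\gamma=q/2$ uniformly on $\overline{C}$, and in particular $\rho=1$. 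Finally $\gamma+\beta=2q$, and Proposition~\ref{P:1} yields the non-trivial eigenvalue $3(q-1)-2q=q-3=\lambda_2(3,q)$, as required.

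No genuine obstacle is expected: the construction is explicit and the verification is a short counting argument. The only point that requires care is confirming that $\gamma$ takes the same value on both ``mixed'' types of non-codewords (two coordinates in $A_0$ and one in $A_1$, or vice versa); this is immediate from the $A_0\leftrightarrow A_1$ involution on $\mathcal{A}^{3}$, which stabilises $C$ and exchanges the two types.
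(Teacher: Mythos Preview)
Your proposal is correct and uses essentially the same construction as the paper: the alphabet lifting of the binary perfect code $\{000,111\}\subset H(3,2)$ to $H(3,q)$. The only difference is that the paper appeals to \cite[Theorem~4.7]{BKMTV} for the verification, whereas you carry out the short neighbour count directly; your argument is self-contained and the counts are right.
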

\begin{proof}
It is easy to find CRCs in $H(3,2)$ with eigenvalue $\lambda_2(3,2)$ with $\gamma=1$ or $\gamma = 2$, e.g., codes $C=\{(0,0,0),(1,1,1)\}$ and $D=\{(0,0,0),(1,0,0),(1,1,1),(0,1,1)\}$, respectively.  
Now we consider the alphabet ${\cal A}$ of size $q$, $q$ is even and the codes in $H(3,q)$ which are obtained from the above CRCs $C$ and $D$ in $H(3,2)$ as follows:
$\{x: x\, mod\, 2 \in C\}$
and $\{x: x\, mod\, 2 \in D\}$, where modulo for a tuple of length $3$ is applied symbol-wise.

Using~\cite[Theorem 4.7]{BKMTV}, for every even $q$, we see that the  codes above are completely regular codes in $H(3,q)$  with $\gamma=\frac{q}{2}$, $\rho = 1$, and eigenvalue $\lambda_2(3,q)$.

\end{proof}

{\bf Construction C (book construction).}

\begin{theorem}
For any even  $q$ there are completely regular codes  in $H(3,q)$  with $\rho=1$, eigenvalue $\lambda_{2}(3,q)$, and any $\gamma$, $\frac{q}{2}<\gamma<q$.
\end{theorem}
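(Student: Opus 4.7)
My plan is to construct the required codes explicitly by stacking $q$ carefully chosen ``pages'' along the first coordinate of $H(3,q)$. Identify $H(3,q)$ with $H(1,q)\,\square\,H(2,q)$ and for $\gamma = q/2 + k$ with $1\leq k\leq q/2-1$, seek the code in the form
\[
 C \;=\; \bigcup_{i\in\mathcal{A}}\{i\}\times D_i
\]
for subsets $D_i\subset\mathcal{A}^2$. By Proposition~\ref{p:OA}.1 every CRC with the second eigenvalue is balanced on hyperfaces, so necessarily $|D_i|=q\gamma/2$ for every $i$, and $\sum_i v_i(a)=\sum_i u_i(a)=q\gamma/2$ for every $a\in\mathcal{A}$, where $v_i$ and $u_i$ denote the row and column counts of $D_i$. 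Once the CRC condition is verified, the eigenvalue claim follows from Proposition~\ref{P:1} and the relation $\gamma+\beta=2q$.

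Set $r(y,z) = |\{i : (y,z) \in D_i\}|$. Splitting the neighbor count of a generic vertex $(i,y,z)$ by coordinate direction, the CRC condition for covering radius $1$ reduces to the single identity
\[
 r(y,z) + u_i(z) + v_i(y) \;=\; \gamma + q\cdot\mathbf{1}_{\{(i,y,z)\in C\}}
\]
for every triple $(i,y,z)$. I would parameterize the pages as cyclic shifts of a fixed building block in $H(2,q)$---for instance, a union of selected full rows and full columns, trimmed to size $q\gamma/2$ in terms of $k$---so that $r$, $u_i$, and $v_i$ each take only a few distinct values and the identity above reduces to a short list of arithmetic equalities in $k$ and $q$. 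The hyperface balance in directions~$2$ and~$3$ is then automatic from the cyclic symmetry, and the size computation $|C|=q\cdot q\gamma/2 = q^2\gamma/2$ is immediate.

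The main obstacle is accommodating odd values of $\gamma$ in this range. When $\gamma$ is even each page can be taken $(\gamma/2,\gamma/2)$-stochastic in $H(2,q)$ by Proposition~\ref{P:2}.2, whereupon $u_i+v_i$ is identically $\gamma$ and the identity collapses trivially---this case is essentially already supplied by Construction~A (Proposition~\ref{Construction:A}). But when $\gamma$ is odd, no individual page can be stochastic, and the pages must be selected so that their row and column irregularities cancel in the aggregate sums while the function $r(y,z)$ compensates pointwise for the local deviations forced on each page. Exhibiting an explicit family of pages with this simultaneous compensation property, and checking the identity for it, is the combinatorial core of the book construction; once that is done the eigenvalue conclusion follows directly from Proposition~\ref{P:1}.
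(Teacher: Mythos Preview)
Your framework is sound: the reduction of the CRC condition to the pointwise identity
\[
 r(y,z) + u_i(z) + v_i(y) \;=\; \gamma + q\cdot\mathbf{1}_{\{(i,y,z)\in C\}}
\]
is correct, and this is exactly the bookkeeping that underlies the paper's ``book construction''. However, the proposal is not a proof, because the explicit pages $D_i$ are never produced. You write that you ``would parameterize the pages as cyclic shifts of a fixed building block\ldots for instance, a union of selected full rows and full columns, trimmed to size $q\gamma/2$'', and then acknowledge that ``exhibiting an explicit family of pages with this simultaneous compensation property\ldots is the combinatorial core''. That core is precisely what is missing. Everything prior to it is a reformulation of the target; nothing after it is carried out.

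For comparison, the paper's construction is concrete and does not use cyclic shifts. With $\gamma=t$, $q/2<t<q$, it takes $T=\{0,\dots,t-1\}$, $S=\{0,\dots,q/2-1\}$, and a $(t-q/2,\,t-q/2)$-stochastic set $D\subset T\times T$ (which exists by Proposition~\ref{P:2}.2 since $q$ is even), and uses only \emph{two} page types: $D\cup(T\times\overline{T})$ for indices in $S$ and $D\cup(\overline{T}\times T)$ for indices in $\overline{S}$. In your notation this gives $r(y,z)\in\{0,q/2,q\}$ and each of $u_i,v_i$ taking values in $\{0,\,t-q/2,\,q/2,\,t\}$, and the identity above is then a short finite case check (which the paper performs block by block). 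The compensation for odd $\gamma$ is handled not by cyclic averaging but by the common ``spine'' $D\times\mathcal{A}$ together with the swap $T\times\overline{T}\leftrightarrow\overline{T}\times T$ between the two halves. Until you actually specify your pages and verify the identity, your proposal remains a plan rather than a proof.
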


\begin{proof}

Let $T$ and $S$  be the following subsets of the alphabet set $\cal{ A}$: $T=\{0,\ldots,t-1\}$ and $S = \{ 0,\ldots,q/2-1 \}$, where $t > q/2$.  We also define  $\overline{T} = \mathcal{A} \setminus T$ and  $\overline{S} = \mathcal{A} \setminus S$.

 We choose a completely regular code    $D$  in $H(2,t)$ with $\rho=1$,  eigenvalue $-2$ and parameters $\gamma'=2t-q$ and $\beta'=q$. The existence of such a code follows from Proposition \ref{P:2}.2 because $q$ is even.   Note that every maximal clique of $H(2,t)$ contains $t - q/2$ vertices of $D$ and $q/2$ vertices  of $\overline{D}$.
 
Define the code $C$ in $H(3,q)$ as
\begin{gather*}
C = (T \times  \overline{T} \times S) \cup  (\overline{T} \times T \times  \overline{S}) \cup (D \times \mathcal{A});  \\
\overline{C} = \mathcal{A}^3 \setminus C,
\end{gather*}
where  $D$ is considered as a subset of $T \times T$. We illustrate the construction in Figure 1.
Let us prove that $C$ is a CRC with  $\gamma = t$ and $\beta=2q-t$.

Each vertex of $C$ from $T \times  \overline{T} \times S$ is adjacent to $q-t$ vertices  of $\overline{C}$ from $\overline{T} \times  \overline{T} \times S$, $q/2$ vertices of $\overline{C}$ from  $T \times  \overline{T} \times \overline{S}$  and $q/2$  vertices of $\overline{C}$ from $T \times T \times S$. So it is adjacent to $2q-t$ vertices of $\overline{C}$ in total.
By similar reasoning,  each vertex of $C$ from $\overline {T} \times  T \times \overline{S}$ is adjacent to $2q - t$ vertices of $\overline{C}$.
\begin{table}[H]
  \caption*{{\bf Figure 1.} {\it Construction C:  a CRC  with $\gamma=5$ in $H(3,6)$ represented via hyperfaces of direction $3$. Two vertices are adjacent in $H(3,6)$ if they are in the same row/column or coincide after overlaying of hyperfaces. 
 The first three hyperfaces correspond to the set $S$. The vertices of $C$ are denoted as $*$.
 The code closes the open case in  \cite[Table A.3, line 8]{BKMTV}.
 }
 
 }
$\begin{array}{llllll|l}
       &     &  &T &  \\ 

       & *    &*&  . &. &.&*\\ 
           T   &   .  &*&  * &. &.&*\\ 
                     & .    &.&  * &* &.&*\\ 
       & .    &.&  .&*&*&*\\ 
       & *    &.&  . &.&*&* \\ 
        \hline
       & .    &.&  .&.&.&.\\ 
        \end{array} 
\hspace{1cm}\begin{array}{llllll}
         &   &&&\\
           * &*&  . & .&.& \multicolumn{1}{| c }{*}\\ 
           . &*&  * & .&.& \multicolumn{1}{| c }{*}\\ 
           . &.&  * & *&.& \multicolumn{1}{| c }{*}\\ 
           . &.&  . & *&*& \multicolumn{1}{| c }{*}\\ 
           * &.&  . & .&*& \multicolumn{1}{| c }{*}\\ 
         
        \hline
          . &.&  .& .&  . &\multicolumn{1}{| c }{.}\\ 
        \end{array}$ 
$\hspace{1cm}\begin{array}{llllll}
         &   &&&\\
           * &*&  . & .&.& \multicolumn{1}{| c }{*}\\ 
           . &*&  * & .&.& \multicolumn{1}{| c }{*}\\ 
           . &.&  * & *&.& \multicolumn{1}{| c }{*}\\ 
           . &.&  . & *&*& \multicolumn{1}{| c }{*}\\ 
           * &.&  . & .&*& \multicolumn{1}{| c }{*}\\ 
         
        \hline
          . &.&  .& .&  . &\multicolumn{1}{| c }{.}\\ 
        \end{array}$ 

$ \hspace{0.75cm}
\begin{array}{llllll}
         &   &&&\\
           * &*&  . & .&.& \multicolumn{1}{| c }{.}\\ 
           . &*&  * & .&.& \multicolumn{1}{| c }{.}\\ 
           . &.&  * & *&.& \multicolumn{1}{| c }{.}\\ 
           . &.&  . & *&*& \multicolumn{1}{| c }{.}\\ 
           * &.&  . & .&*& \multicolumn{1}{| c }{.}\\ 
         
        \hline
          * &*&  *& *&  * &\multicolumn{1}{| c }{.}\\ 
        \end{array}$ 
\hspace{0.9cm}$\begin{array}{llllll}
         &   &&&\\
           * &*&  . & .&.& \multicolumn{1}{| c }{.}\\ 
           . &*&  * & .&.& \multicolumn{1}{| c }{.}\\ 
           . &.&  * & *&.& \multicolumn{1}{| c }{.}\\ 
           . &.&  . & *&*& \multicolumn{1}{| c }{.}\\ 
           * &.&  . & .&*& \multicolumn{1}{| c }{.}\\ 
         
        \hline
          * &*&  *& *&  * &\multicolumn{1}{| c }{.}\\ 
        \end{array}$  \hspace{1cm} 
$ \begin{array}{llllll}
         &   &&&\\
           * &*&  . & .&.& \multicolumn{1}{| c }{.}\\ 
           . &*&  * & .&.& \multicolumn{1}{| c }{.}\\ 
           . &.&  * & *&.& \multicolumn{1}{| c }{.}\\ 
           . &.&  . & *&*& \multicolumn{1}{| c }{.}\\ 
           * &.&  . & .&*& \multicolumn{1}{| c }{.}\\ 
         
        \hline
          * &*&  *& *&  * &\multicolumn{1}{| c }{.}\\ 
        \end{array}$ 
\hspace{1cm}
\end{table}

Consider a vertex of $C$ from $D \times \mathcal{A}$. It is adjacent to $q$ vertices of $\overline{C}$ from $\overline{D} \times \mathcal{A}$ and $q - t$ vertices of $\overline{C}$ from  $T \times  \overline{T} \times \overline{S}$ or $\overline{T} \times  T \times S$.  So it is also adjacent to $2q-t$ vertices of $\overline{C}$ in total.

Let us look at vertices from $\overline{C}$ now. Each vertex of $\overline{C}$ from $T \times  \overline{T} \times \overline{S}$ is adjacent $q/2$ vertices of $C$ from  $T \times  \overline{T} \times S$ and $t - q/2$ vertices   of $C$ from  $T \times  T \times \mathcal{A}$, so it is adjacent to $t$ vertices in total. By similar reasoning,  each vertex of $\overline{C}$ from $\overline {T} \times  T \times S$ is adjacent to $ t$ vertices of $C$.

Consider a vertex of $\overline{C}$ from $\overline{D} \times \mathcal{A}$. It is adjacent to $2t - q$ vertices of $C$ from $D \times \mathcal{A}$ and $q - t$ vertices of $C$ from  $T \times  \overline{T} \times S$ or $\overline{T} \times  T \times \overline{S}$.  So it is also adjacent to $t$ vertices of $C$ in total. 
Finally, each vertex of $\overline{C}$ from  $\overline{T} \times  \overline{T} \times \mathcal{A}$ is adjacent to $t$ vertices of $C$ from  $T \times  \overline{T} \times S$ or $\overline{T} \times  T \times \overline{S}$. Thus,  $C$ is a CRC  with intersection array $\{2q-t;t\}$ and has eigenvalue $\lambda_2(3,q)$.

\end{proof}

{\bf Construction D }

Consider the following condition for $q$ and $\gamma$.

{\bf Condition 1:} There are  integers $r$, $s$, $t$, $0< r, s, t<q$ and integers $a$, $b$, $c$, $0 < c \leq \min\{ q-s, q-t\}, 0 < b \leq \min\{ t, q-r\}, 0 < a \leq \min\{ r, s\}$ such that:   
$$\left\{  \begin{array}{l}
cr = a (q - t) \\
b(q - s) = c (q - r) \\
at = bs\\
\gamma=a+b+c.\\
\end{array} \right.$$

\begin{theorem}\label{T:orthg const}
If Condition 1 holds then there is a CRC in $H(3,q)$ with covering radius $1$,    eigenvalue $\lambda_2(3,q)$, and $\gamma$.
\end{theorem}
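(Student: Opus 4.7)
The plan is to build the code $C$ explicitly as the disjoint union of three ``slabs'', each being the lift to $H(3,q)$ of a two-dimensional stochastic set whose existence is supplied by one of the equations in Condition~1.

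First I would fix partitions $\mathcal{A}=R\sqcup\overline{R}=S\sqcup\overline{S}=T\sqcup\overline{T}$ with $|R|=r$, $|S|=s$, $|T|=t$. The three equalities in Condition~1 are exactly the existence conditions of Proposition~\ref{P:2}.2 for an $(a,b)$-stochastic set $E_1\subset S\times T$, an $(a,c)$-stochastic set $E_2\subset R\times\overline{T}$, and a $(b,c)$-stochastic set $E_3\subset\overline{R}\times\overline{S}$; the inequalities $a\leq\min(r,s)$, $b\leq\min(t,q-r)$, $c\leq\min(q-s,q-t)$ keep each stochastic density in the admissible range (boundary cases such as $a=s$ correspond to $E_1$ being the entire product $S\times T$, which is also allowed). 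I then define
\[
C_1=\mathcal{A}\times E_1,\qquad C_2=\{(x,y,z)\in\mathcal{A}^3:(x,z)\in E_2\},\qquad C_3=E_3\times\mathcal{A},
\]
and set $C=C_1\cup C_2\cup C_3$. These three pieces are pairwise disjoint because any two of them disagree on which half of $\mathcal{A}$ a specific coordinate must lie in: $C_1$ forces $z\in T$ while $C_2$ forces $z\in\overline{T}$; $C_1$ forces $y\in S$ while $C_3$ forces $y\in\overline{S}$; and $C_2$ forces $x\in R$ while $C_3$ forces $x\in\overline{R}$.

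Next I would verify that $C$ is a CRC with intersection array $\{2q-a-b-c;\,a+b+c\}$, from which Proposition~\ref{P:1} immediately yields the eigenvalue $q-3=\lambda_2(3,q)$. For $v\in C$, the ``free'' coordinate of the slab containing $v$ gives all $q-1$ of its neighbors in that direction inside $C$; in the two remaining directions I read off contributions of $a-1$, $b-1$ or $c-1$ from the same slab and possibly $a$, $b$ or $c$ from one of the other slabs, depending on which halves of $\mathcal{A}$ the remaining coordinates of $v$ lie in, and a short case split shows that the number of neighbors of $v$ in $\overline{C}$ always equals $2q-a-b-c$. For $v\in\overline{C}$ I split into the eight octants $R^{\pm}\times S^{\pm}\times T^{\pm}$; in each octant, exactly three of the nine possible ``direction $\times$ slab'' pairings contribute, giving respectively $a$, $b$ and $c$ neighbors of $v$ in $C$, for a total of $\gamma=a+b+c$.

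The only real obstacle is the case analysis just described: it is essentially pure bookkeeping, but the fact that all eight octant counts and both vertex types reduce to the same constants $\gamma$ and $\beta$ is precisely the content of the three identities of Condition~1, each of which equates two counts arising in different octants; in particular, the sufficiency of Condition~1 is tight in the sense that if any one equation failed, some pair of octant counts would also fail to agree.
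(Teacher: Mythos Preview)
Your proposal is correct and is essentially identical to the paper's own proof: the paper builds the same three slabs $C^1=\mathcal{A}\times D^1$, $C^2$, $C^3$ from $(a,b)$-, $(a,c)$-, $(b,c)$-stochastic sets $D^1\subset S\times T$, $D^2\subset R\times\overline{T}$, $D^3\subset\overline{R}\times\overline{S}$ whose existence is guaranteed by the three equalities of Condition~1, and then verifies $\beta=2q-(a+b+c)$ and $\gamma=a+b+c$ by exactly the octant case analysis you outline (presented there as two tables). Your closing remark is slightly off, though: the three equalities are used only to guarantee the existence of the stochastic sets, and once those sets are in hand the octant counts match automatically without any further appeal to Condition~1.
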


\begin{proof}
Let $R$, $S$, $T$ be subsets of the alphabet set ${\cal A}$, $|{\cal A}| = q$, $|R|=r$, $|S|=s$, $|T|=t$.  We identify the  vertex sets of the complete graphs $H(1,r)$, $H(1,s)$, $H(1,t)$, $H(1,q-r)$, $H(1,q-s)$, and $H(q-t,1)$ with the sets $R$, $S$, $T$, $\overline{R}$, $\overline{S}$,  and $\overline{T}$, respectively.

Further, we choose the sets $D^1\subseteq S\times T$, $D^2\subseteq R\times \overline{T}$, $D^3\subseteq \overline{R}\times \overline{S}$ such that \begin{itemize}
\item $D^1$ is a $(a,b)$-stochastic set in  $H(1,s) \Box H(1,t)$;
\item $D^2$ is a $(a,c)$-stochastic  set in  $H(1,{r}) \Box H(1,{q-t})$;
\item $D^3$ is a $(b,c)$-stochastic  set in  $H(1,{q-r}) \Box H(1,{q-s})$.
\end{itemize} Due to the definition of a stochastic set, we see that for each $i\in\{1,2,3\}$ the set $D^i$  either coincides with the corresponding graph or $D^i$ is a CRC in the graph. Proposition~\ref{P:2} and the conditions of the theorem imply that such codes exist and have eigenvalue $-2$. 
In particular, we have that
$$(q-s)tr = s(q-t)(q-r).$$

Consider the set $C = C^1 \cup C^2 \cup C^3$ in $H(3,q)$, where
\begin{gather*}
C^1 = \{(x_1,x_2,x_3): x_1 \in {\cal A}, x_2 \in S,  x_3 \in T,  (x_2,x_3)\in D^1\};\\
C^2 =\{(x_1,x_2,x_3): x_1 \in R, x_2 \in {\cal A}, x_3 \in \overline{T}, (x_1,x_3)\in D^2\}; \\
C^3 =\{(x_1,x_2,x_3): x_1 \in \overline{R}, x_2 \in \overline{S},  x_3 \in {\cal A}, (x_1,x_2)\in D^3\}.
\end{gather*}

Let us prove that $C$ is a CRC in $H(3,q)$ with $\beta = 2q - (a + b +c)$, $\gamma = a + b + c$, and therefore with eigenvalue $\lambda_2(3,q)$. For this aim we count in Table 1  the numbers of the vertices of $\overline{C}$ from a block $V$ (column) of the graph $H(3,q)$ adjacent to any fixed vertex of set $C$ from  a block $U$ (row).

$$
\begin{array}{|c||c|c|c|c|}
\hline
U / V & R \times \overline{S} \times T &  \overline{R} \times \overline{S} \times T &  R \times S \times T &  R \times \overline{S} \times \overline{T}   \\
\hline
\hline
R \times S \times T & q-s & ~ & (s - a) + (t -b) & ~  \\ 
\hline
\overline{R} \times S \times T & ~ & q-s - c & ~ & ~ \\ 
\hline
\hline
R \times \overline{S} \times \overline{T}  & t & ~ & ~ & (r-a) + (q - t - c)  \\ 
\hline
R \times S \times \overline{T}  & ~ & ~ & t - b & ~ \\ 
\hline
\hline
\overline{R} \times \overline{S} \times T  & r & (q - r - b) + (q-s-c) & ~ & ~ \\ 
\hline
\overline{R} \times \overline{S} \times \overline{T}  & ~ & ~ & ~ & r - a \\ 
\hline
\end{array}
$$

$$
\begin{array}{|c||c|c|c|c|}
\hline
U / V &   \overline{R} \times S \times T &  \overline{R} \times \overline{S} \times \overline{T} &  R \times S \times \overline{T} & \overline{R} \times S \times \overline{T} \\
\hline
\hline
R \times S \times T & ~ & ~ & q - t - c & ~ \\ 
\hline
\overline{R} \times S \times T & (s - a) + (t -b) & ~ & ~ & q - t \\ 
\hline
\hline
R \times \overline{S} \times \overline{T}  & ~ & q - r - b & ~ & ~  \\ 
\hline
R \times S \times \overline{T}  & ~ & ~ & (r-a) + (q - t - c) & q-r \\ 
\hline
\hline
\overline{R} \times \overline{S} \times T  & s -a & ~  & ~ & ~ \\ 
\hline
\overline{R} \times \overline{S} \times \overline{T}  & ~ & (q - r - b) + (q-s-c)  & ~ & s \\ 
\hline
\end{array}
$$
\begin{center}
Table 1. The numbers of the vertices from $V \cap \overline{C}$ adjacent to a given vertex of  $U \cap C$. The blank spaces are zeros. 
\end{center}

The entries are verified in a straightforward manner. 
For example, a vertex of $R\times S\times T$ is adjacent to $q-s$ vertices of $R\times \overline{S}\times T$ which are obtained by replacing the symbol in the second position of the vertex with any of $q-s$ symbols of $\overline{S}$. Whereas a vertex from $(R\times S\times T) \cap C$ is adjacent to exactly $s-a$ and $t-b$ vertices of $(R\times S\times T) \cap \overline{C}$ due to $C$ being $(a,b)$-stochastic in $S\times T$.

Since the sum of the entries in each row of Table 1 is $2q - (a + b +c)$, we see that  $\beta = 2q - (a + b +c)$. 

Let us now verify that $\gamma$ is $a + b + c$. For this purpose we count the numbers of vertices of $C$ from a block $V$ of the graph $H(3,q)$ adjacent to any fixed  vertex of the set $\overline{C}$ from  a block $U$.

$$
\begin{array}{|c||c|c|c|c|c|c|}
\hline
U / V &  \overline{R} \times \overline{S} \times T &  R \times S \times T &  R \times \overline{S} \times \overline{T}   &   \overline{R} \times S \times T &  \overline{R} \times \overline{S} \times \overline{T} &  R \times S \times \overline{T}  \\
\hline
\hline
R \times \overline{S} \times T & b & a & c  & ~ & ~ & ~  \\ 
\hline
\overline{R} \times \overline{S} \times T & c + b & ~ & ~ & a & ~ & ~ \\
\hline
R \times S \times T & ~ & b + a & ~ & ~ & ~ & c \\
\hline
R \times \overline{S} \times \overline{T} & ~ & ~ & c + a  & ~ & b & ~\\
\hline
\overline{R} \times S \times T & c & ~ & ~ & b+a & ~ & ~ \\
\hline
\overline{R} \times \overline{S} \times \overline{T} & ~ & ~ & a & ~ & c + b & ~ \\
\hline
R \times S \times \overline{T} & ~ & b & ~ & ~ & ~ & c + a  \\
\hline
\overline{R} \times S \times \overline{T} & ~ & ~ & ~ & b & c & a \\
\hline
\end{array}
$$
\begin{center}
Table 2. The numbers of the vertices from $V \cap C$ adjacent to a given vertex of  $U \cap \overline{C}$. The blank spaces are zeros. 
\end{center}

Since the sum of the entries in each row of Table 2 is $a + b +c$, we see that  $\gamma = a + b +c$, and so $C$ is a CRC with eigenvalue $\lambda_2(3,q)$.

\end{proof}

\begin{remark}
Splitting construction II from~\cite{BKMTV} applied to $H(3,q)$ is a partial case  of Construction D, where we take  $r = s = t = q/2$ and $a = b = c$ ($q$ is even). In particular, from Corollary~4.17(1) of~\cite{BKMTV} we see that Construction D gives completely regular codes in $H(3,q)$ with $\rho=1$ and  eigenvalue $\lambda_2(3,q)$ for all even $q$ and $\gamma = 3 \tau$ or $\gamma = q/2 + 3\tau - 3$, $\tau = 1, \ldots, q/2$.
\end{remark}

\begin{remark}
\label{r:2}    
Construction D produces  many  codes with different parameters. In below we provide several examples which we find exotic as the illustration of the construction.

If we set $q=8$, $r=s=t=4$, $a=b=c=2$ and  $r=2$,  $s=4$, $t=6$ , $a=c=2$, $b=3$ we obtain codes with $\gamma=6$ and $7$, both greater than $\frac{q}{2}=4$. For $q=32$, $s=28$, $t=16$, $r=28$, $a=7$, $b=4$, $c=4$, we get $\gamma=a+b+c=15$. In this case the set $S\times T$ coincides with $D^1$,  whereas $D^2$ and $D^3$ are proper subsets of $R\times \overline{T}$ and $\overline{R}\times \overline{S}$. In other words, the upper bound $min\{t,q-r\}$ in Condition 1 is attained for  $b$.

Construction D also gives codes over odd alphabet $q$ and even $\gamma$ nonisomorphic to those  from Construction A. The smallest value of $q$ for which such code exists is $45$.  By taking $r=9$, $s=15$, $t=30$ and $a=3$, $b=6$, $c=5$, we obtain a CRC with $\gamma=14$. 
\end{remark}

\section{CRCs in  $H(3,q)$ with clique property}\label{S:CRCsclique}

We will say that a completely regular code $C$ with $\rho=1$ in $H(3,q)$ has a {\it clique property}  if  $C$ consists of disjoint cliques. 
 If, moreover, there are cliques of all three codirections in $C$, we say that $C$  fulfills a {\it strong clique property}.

Now we work towards showing that Construction D describes all completely regular codes  in $H(3,q)$ with a strong clique property. In this section we use only combinatorial arguments accompanied with even distribution of CRC by hyperfaces (Proposition \ref{p:OA}.1).

\begin{lemma}\label{l:str_clique_ev}  If $C$ is a completely regular code with $\rho=1$ in $H(3,q)$ fulfilling a strong clique property, then $\lambda_2(3,q)$ is its eigenvalue.

\end{lemma}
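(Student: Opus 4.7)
The plan is to pin down the unique non-valency eigenvalue $\lambda$ of $C$ via Proposition~\ref{P:1}. Since $H(3,q)$ is $3(q-1)$-regular and $\lambda$ must be an eigenvalue of the graph, we have $\lambda\in\{\lambda_1(3,q),\lambda_2(3,q),\lambda_3(3,q)\}$; I would rule out $\lambda_3(3,q)$ and $\lambda_1(3,q)$, leaving $\lambda=\lambda_2(3,q)$.

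To exclude $\lambda=\lambda_3(3,q)$, I would use Proposition~\ref{p:OA}.2: this eigenvalue is equivalent to every maximal clique of $H(3,q)$ meeting $C$ in the same number of vertices. The strong clique property provides some maximal clique $K\subseteq C$, so this common intersection number would be $q$; hence every maximal clique lies in $C$, and as every vertex is contained in some maximal clique, $C=V(H(3,q))$. This contradicts the standing assumption $|C|\leq |V(H(3,q))|/2$.

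To exclude $\lambda=\lambda_1(3,q)$, I would use the spectral description of the corresponding eigenspace of $H(3,q)=K_q\,\square\,K_q\,\square\,K_q$: it is spanned by eigenfunctions depending on only one coordinate with zero coordinate-sum. For a CRC with $\rho=1$ the decomposition $\chi_C=(|C|/|V(H(3,q))|)\mathbf{1}+\pi_\lambda$, where $\pi_\lambda$ lies in the $\lambda$-eigenspace of the adjacency matrix, is a standard consequence of Proposition~\ref{P:1} (the vector $\chi_C-(|C|/|V|)\mathbf{1}$ satisfies $A(\chi_C-c\mathbf{1})=(\alpha_0-\gamma)(\chi_C-c\mathbf{1})$). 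So $\lambda(C)=\lambda_1(3,q)$ forces
\[
\chi_C(x_1,x_2,x_3)=c_0+g_1(x_1)+g_2(x_2)+g_3(x_3)
\]
for some constants $c_0$ and functions $g_i\colon\mathcal{A}\to\mathbb{R}$. Now the strong clique property supplies a clique $K_i\subseteq C$ of each codirection $i\in\{1,2,3\}$. On $K_i$ the two coordinates $x_j$, $j\neq i$, are fixed while $x_i$ ranges over $\mathcal{A}$ and $\chi_C\equiv 1$; this forces $g_i$ to be constant. Iterating over $i=1,2,3$ reduces $\chi_C$ to a constant, so $C\in\{\emptyset,V(H(3,q))\}$, both impossible.

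The main obstacle is the spectral equivalence invoked in the last step; once it is in place, the combinatorial consequence of the three cliques of distinct codirections is very short. An alternative, purely combinatorial route is to look at the function $(x_2,x_3)\mapsto |\{x_1\in\mathcal{A}:(x_1,x_2,x_3)\in C\}|$, i.e.\ the intersection of $C$ with the codirection-$1$ clique at $(x_2,x_3)$, and verify that when $C$ contains cliques of all three codirections this function cannot be written as a sum of a function of $x_2$ and a function of $x_3$; this directly rules out the representation $\chi_C=c_0+g_1(x_1)+g_2(x_2)+g_3(x_3)$.
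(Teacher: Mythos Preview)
Your argument is correct. The exclusion of $\lambda_3(3,q)$ is essentially the paper's: both invoke Proposition~\ref{p:OA}.2 to see that a CRC with the minimum eigenvalue cannot contain an entire maximal clique, which the strong clique property immediately contradicts. (The contradiction could be stated even more simply: $C=V(H(3,q))$ is impossible for a code of covering radius~$1$, regardless of the convention $|C|\le|V|/2$.)

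For $\lambda_1(3,q)$ you take a genuinely different route. The paper simply quotes Meyerowitz's classification \cite{Mey}: every CRC with $\rho=1$ and eigenvalue $\lambda_1(3,q)$ is, up to a coordinate permutation, of the form $B\times\mathcal{A}\times\mathcal{A}$, and such a code contains no clique of codirection~$1$. You instead use the tensor structure of the $\lambda_1$-eigenspace of $H(3,q)=K_q^{\,\square 3}$ (it is spanned by functions depending on a single coordinate with zero sum), combine it with the standard fact that $\chi_C-(|C|/|V|)\mathbf{1}$ lies in that eigenspace, and then observe that a full clique of codirection $i$ forces $g_i$ to be constant. This is self-contained and avoids importing the Meyerowitz classification; the trade-off is that you must justify the description of the $\lambda_1$-eigenspace, which is standard but not stated in the present paper. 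Either way the combinatorial punch line---the presence of cliques of all three codirections---is what kills the $\lambda_1$ case, and your version makes that mechanism explicit rather than hiding it inside the cited classification. The ``alternative, purely combinatorial route'' you sketch at the end is not needed once the first argument is in place.
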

\begin{proof}
The nontrivial eigenvalues of a completely regular code  with $\rho=1$ in $H(3,q)$ is $\lambda_i(3,q)$ for some $i=1, 2, 3$. Let us exclude cases $i=1$ and $3$.

All codes   in $H(n,q)$ with eigenvalue $\lambda_1(n,q)$ were characterized in \cite{Mey}. All such codes for $n=3$ up to permutation of three positions are  $B\times {\cal A}\times {\cal A}$ for a proper nonempty subset $B$ of ${\cal A}$. 
 We see that these CRCs fulfill the clique property, but not the
 strong clique property because these codes do not contain cliques of codirection $1$.  

Let $C$ be a CRC with $\rho=1$ in $H(3,q)$ with eigenvalue $\lambda_3(3,q)$. Then due to Proposition \ref{p:OA}.2,  any  maximal clique $H(3,q)$ cannot consist solely of vertices of $C$.

\end{proof}

Let $C^i$  be a collection of cliques  in $H(3,q)$ of codirection $i$, $i=1, 2, 3$, such that $C^1$, $C^2$, $C^3$ are pairwise disjoint, and let $C=C^1\cup C^2\cup C^3$.
Given $C^1$, let $S$ and $T$ be the minimal (by inclusion) sets in ${\cal A}$ such that 

\begin{equation}\label{Cleq:1}C^1\subseteq {\cal A}\times S\times T.\end{equation}

Similarly, we consider  symbol sets $R$, $T'$, $R'$ and $S'$ such that

\begin{equation}\label{Cleq:2}C^2 \subseteq R\times {\cal A}\times T'.\end{equation}

\begin{equation}\label{Cleq:3}C^3\subseteq R'\times S' \times {\cal A}.\end{equation}

We show that the introduced sets  are tightly related.
\begin{lemma}\label{l:1} Let $C$ be a CRC with $\rho=1$ in $H(3,q)$, where $C=C^1\cup C^2\cup C^3$  and  $S$, $S'$, $T$, $T'$, $R$, $R'$ are minimal sets such that (\ref{Cleq:1}), (\ref{Cleq:2}), (\ref{Cleq:3}) hold.  Then   $R'=\overline{R}$, $T'=\overline{T}$, $S'=\overline{S}$. 
\end{lemma}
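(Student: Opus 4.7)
The plan is to combine the disjointness of the three clique families with the even distribution of the code over hyperfaces (Proposition~\ref{p:OA}.1) to pin down the complementary relations. I will prove $T' = \overline{T}$ in detail; the identities $R' = \overline{R}$ and $S' = \overline{S}$ follow by cyclically permuting the roles of the three coordinates.

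First I would establish $T \cap T' = \emptyset$ purely combinatorially: if some $u$ belonged to both, then by minimality of $T$ there would exist $s \in S$ with $\mathcal{A}\times\{s\}\times\{u\} \in C^1$, and by minimality of $T'$ there would exist $r \in R$ with $\{r\}\times\mathcal{A}\times\{u\} \in C^2$; but these two cliques share the vertex $(r,s,u)$, contradicting the disjointness of $C^1$ and $C^2$. Hence $T' \subseteq \overline{T}$, and the remaining task is to rule out $\mathcal{A}\setminus(T\cup T')$ being nonempty.

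For that, the strong clique hypothesis of the section together with Lemma~\ref{l:str_clique_ev} forces the eigenvalue $\lambda_2(3,q)$, so Proposition~\ref{p:OA}.1 guarantees that each hyperface of $H(3,q)$ contains exactly $\gamma q/2$ vertices of $C$. Fixing $u \in \mathcal{A}$ and intersecting $C$ with the hyperface $\mathcal{A}\times\mathcal{A}\times\{u\}$, each clique $\mathcal{A}\times\{s\}\times\{u\}$ of $C^1$ and each clique $\{r\}\times\mathcal{A}\times\{u\}$ of $C^2$ contributes $q$ vertices, while every clique $\{r'\}\times\{s'\}\times\mathcal{A}$ of $C^3$ meets the hyperface in exactly one point. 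Denoting by $|P_1^u|$ and $|P_2^u|$ the numbers of cliques in $C^1$ and $C^2$ with third coordinate $u$, and by $N_3$ the total number of cliques in $C^3$, this yields
\[
q|P_1^u| + q|P_2^u| + N_3 = \frac{\gamma q}{2}.
\]

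Now suppose for contradiction that some $u_0 \in \mathcal{A}\setminus(T\cup T')$ exists; then $|P_1^{u_0}| = |P_2^{u_0}| = 0$, forcing $N_3 = \gamma q/2$. Substituting any $u_1 \in T$ back into the identity then gives $|P_1^{u_1}| = 0$, contradicting the minimality of $T$. Therefore $T\cup T' = \mathcal{A}$ and, combined with $T\cap T' = \emptyset$, we obtain $T' = \overline{T}$. The same three-step argument applied to hyperfaces of directions $1$ and $2$ produces $R' = \overline{R}$ and $S' = \overline{S}$; the main obstacle is really just bookkeeping, namely to keep straight that in each of those counts it is a different one of the three clique families whose members meet the relevant hyperface in a single vertex rather than $q$ vertices.
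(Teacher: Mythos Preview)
Your proof is correct and follows essentially the same route as the paper's: invoke Lemma~\ref{l:str_clique_ev} to obtain the second eigenvalue, then use Proposition~\ref{p:OA}.1 together with the fact that the codirection-$i$ clique family meets every direction-$i$ hyperface in the same number of points to compare hyperfaces and derive a contradiction. The only difference is presentational: you write down the explicit count $q|P_1^u|+q|P_2^u|+N_3=\gamma q/2$ and also spell out the disjointness step $T\cap T'=\emptyset$ via the shared vertex of two cliques, whereas the paper leaves that step tacit and argues directly that $\mathcal{A}\setminus(R\cup R')=\emptyset$ by comparing two specific hyperfaces.
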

\begin{proof}
Without loss of generality,  it is sufficient to prove that $R'=\overline{R}$ (for other sets the proof is by a permutation of coordinate positions). Suppose the opposite and let $v$  be an element of ${\cal A}\setminus (R\cup R')$.

Let $u$ be an element of $R$. We compare the numbers of the vertices of $C$ in hyperfaces $v\times {\cal A}\times {\cal A}$ and $u\times {\cal A}\times {\cal A}$. From (\ref{Cleq:2}) and (\ref{Cleq:3}) we have that  $C^2\cup C^3\subset (R\times {\cal A}\times T') \cup (R'\times S' \times {\cal A}$).
This, taking into account that $v$ is not in $R\cup R'$, implies  that the hyperface $v\times {\cal A}\times {\cal A}$ does not contain vertices of $C^2$ and $C^3$. On the other hand, since $u\in R$, $R$ and $T'$ are minimal sets for which $C^2\subset R\times {\cal A}\times T'$,   
the hyperface $u\times {\cal A}\times {\cal A}$ contains at least one vertex of $C^2$. 

Since $C^1$ consists of cliques of codirection $1$, taking into account (\ref{eq:obvious}), we see that both hyperfaces $u\times {\cal A}\times {\cal A}$ and $v\times {\cal A}\times {\cal A}$ of direction $1$ contain the same number of vertices from $C^1$. From above considerations  we conclude that these hyperfaces contain different numbers of vertices from $C$. This contradicts Proposition \ref{p:OA}.1, which holds because by Lemma \ref{l:str_clique_ev} the code $C$ has the second eigenvalue.

\end{proof}

Let sets $D^1\subseteq S\times T$, $D^2\subseteq R\times \overline{T}$, and $D^3\subseteq \overline{R}\times \overline{S}$ be obtained from sets $C^1$, $C^2$, and $C^3$ by deleting positions $1$, $2$, $3$, respectively. We also denote by $\Gamma^1$, $\Gamma^2$ and, $\Gamma^3$ respectively the subgraphs of $H(2,q)$ induced by the sets $S\times T$, $R\times \overline{T}$, and $\overline{R}\times \overline{S}$, respectively.
\begin{lemma}\label{l:2}
Under the conditions of Lemma \ref{l:1}  the following hold.

 1. The set $D^1$ ($D^2$, $D^3$ ) either coincides with $S\times T$ ($R\times \overline{T}$,  $\overline{R}\times \overline{S}$) or it is a CRC in  $\Gamma^1$ ($\Gamma^2$, $\Gamma^3$) with $\rho=1$ and eigenvalue $-2$.

2. There are nonzero integers $a$, $b$, $c$ such that $D^1$ is $(a,b)$-stochastic in the graph $\Gamma^1$, $D^2$ is $(a,c)$-stochastic in $\Gamma^2$, and
$D^3$ is $(b,c)$-stochastic in $\Gamma^3$.

\end{lemma}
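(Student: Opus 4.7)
The plan is to derive Part 2 directly from a careful accounting of hyperface intersections in $H(3,q)$, after which Part 1 becomes an immediate application of Proposition \ref{P:2}.1. Since the strong clique property is in force (via Lemma \ref{l:1}), Lemma \ref{l:str_clique_ev} gives that $C$ has eigenvalue $\lambda_2(3,q)$, so Proposition \ref{p:OA}.1 applies: every hyperface $F$ of $H(3,q)$ meets $C$ in the same number $\gamma q/2$ of vertices.

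The key observation is that a hyperface of direction $i$ meets each clique of codirection $i$ in exactly one vertex, while it either contains or is disjoint from each clique of codirection $j\neq i$ depending on whether the fixed coordinate matches. Fixing $F=\{r_0\}\times {\cal A}\times {\cal A}$, this gives $|F\cap C|=|D^1|+q\cdot f(r_0)$, where $f(r_0)=|\{z\in\overline{T}:(r_0,z)\in D^2\}|$ for $r_0\in R$ and $f(r_0)=|\{y\in\overline{S}:(r_0,y)\in D^3\}|$ for $r_0\in\overline{R}$. Since $|F\cap C|$ is independent of $r_0$, the function $f$ is a single constant which I denote by $c$. Applying the same argument to hyperfaces of directions $2$ and $3$ produces two further constants $b$ and $a$: the direction-$2$ count binds $|D^1\cap(\{s_0\}\times T)|$ (for $s_0\in S$) and $|D^3\cap(\overline{R}\times\{s_0\})|$ (for $s_0\in\overline{S}$) into a single value $b$, and the direction-$3$ count binds $|D^1\cap(S\times\{t_0\})|$ (for $t_0\in T$) and $|D^2\cap(R\times\{t_0\})|$ (for $t_0\in\overline{T}$) into a single value $a$.

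Assembling the three constants gives that $D^1$ is $(a,b)$-stochastic in $\Gamma^1$, $D^2$ is $(a,c)$-stochastic in $\Gamma^2$, and $D^3$ is $(b,c)$-stochastic in $\Gamma^3$, which is Part 2. Positivity of $a$, $b$, $c$ follows from the strong clique hypothesis, since each $D^i$ is then nonempty and so each row/column count is strictly positive. Part 1 then falls out immediately: for each $i$, either $D^i$ fills its ambient product (the first alternative in the statement), or the stochastic property together with Proposition \ref{P:2}.1 identifies $D^i$ as a CRC with $\rho=1$ and eigenvalue $-2$.

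The main hurdle is bookkeeping: the six pieces $R,\overline{R},S,\overline{S},T,\overline{T}$ enter in many combinations, and one must track which rows and columns of each $D^i$ are active for each hyperface direction. The clarifying principle is that each hyperface direction is transverse to exactly one of the three families of cliques and contains the other two on complementary portions of the fixed coordinate, so the single equal-distribution identity splits cleanly into one constancy statement per direction.
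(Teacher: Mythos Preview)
Your proof is correct and follows essentially the same approach as the paper: both use the even distribution of $C$ over hyperfaces (Proposition~\ref{p:OA}.1) together with the observation that a hyperface of direction $i$ meets every codirection-$i$ clique in a single vertex while containing or missing the cliques of the other two codirections according to the fixed coordinate. The only difference is organizational: the paper first establishes Part~1 (each $D^i$ is stochastic) by varying the fixed coordinate within one half of~${\cal A}$, and then obtains Part~2 by comparing the two halves, whereas you go straight for Part~2 by letting the fixed coordinate range over all of~${\cal A}$ at once, which is slightly more economical and yields Part~1 as a corollary.
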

\begin{proof} Firstly, we note that any hyperface  with direction $3$ and  the union of cliques $C^3$ of codirection $3$ meet in the same number of vertices regardless of the hyperface, see  (\ref{eq:obvious}). From (\ref{Cleq:3}) and Lemma \ref{l:1}, we see that $C^3\subseteq \overline{R}\times \overline{S}\times {\cal A}$, so \begin{equation}\label{eq_C3}|{\cal A}\times {\cal A}\times w \cap C^3|=|\overline{R}\times \overline{S}\times w \cap C^3|\mbox{ does not depend on } w\in {\cal A}.\end{equation}

1.  In view of  Proposition \ref{P:2} up to a permutation of coordinate positions, it is sufficient to show that $D^1$ is  stochastic, i.e that $|S\times v\cap D^1|$ is constant for all $v$ in $T$.  
Since $C^1$ is obtained from $D^1$   by adding nonessential position, we 
have 
$$|{\cal A}\times S\times v\cap C^1|=q|S\times v\cap D^1|$$ 
and we prove an equivalent statement:    $|{\cal A}\times S\times v\cap C^1|$
does not depend on $v$ in $T$.

 Consider the hyperface ${\cal A}\times {\cal A}\times v$ and count the number of vertices from $C^1$, $C^2$ and $C^3$ in it. 
By (\ref{Cleq:1}) and (\ref{Cleq:3}) we have that  $C^1\subseteq{\cal A}\times S\times T$, $C^3\subseteq \overline{R}\times \overline{S}\times {\cal A}$. Moveover, by (\ref{Cleq:2}) and Lemma \ref{l:1} we get $C^2\subseteq R\times {\cal A}\times \overline{T}$, so  the hyperface 
${\cal A}\times {\cal A}\times v$, $v\in T$, does not contain any vertices of $C^2$. We obtain that
  \begin{equation}\label{face_part_v}|{\cal A}\times {\cal A}\times v \cap C|=|{\cal A}\times S\times v \cap C^1|+|{\cal A}\times \overline{S}\times v \cap C^3|=q| S\times v \cap D^1|+|\overline{R}\times \overline{S}\times v \cap C^3|.\end{equation}
 
    By  Proposition \ref{p:OA}.1 the number of  vertices of $C$ in a hyperface is a constant regardless of hyperface, so    from (\ref{eq_C3}) and (\ref{face_part_v}) we see that   
$|{\cal A}\times S\times v\cap C^1|$ does not depend on $v$ in $T$. We obtain that $|S\times v \cap D^1|$ does not depend on $v$. Similarly to the above, changing notations and permuting two positions, we have that $| u\times T \cap D^1 |$ is a constant for $u\in S$. By Proposition \ref{P:2}.1, we see that $D^1$ is $S\times T$ or a CRC in the graph $\Gamma^1$ with eigenvalue $-2$ .

2. We are to show that the following hold:  
\begin{gather*}
\mbox{for any } v\in T,u \in \overline{T}\mbox{ } |S\times v \cap D^1|=|R\times  u \cap D^2|, \\
\mbox{for any } v\in S,u \in \overline{S}\mbox{ } | v \times T \cap D^1|=|\overline{R}\times u \cap D^3|, \\
\mbox{for any } v\in R,u \in \overline{R}\mbox{ } | v\times \overline{T} \cap D^2|=| u \times \overline{S} \cap D^3|.
\end{gather*}

It is sufficient  to prove that $D^1$ and $D^2$ are both $(a,*)$-stochastic, i.e., the equality $|S\times v\cap D^1|=|R\times u \cap D^2|$  holds  regardless of  
$v\in T$ and $u \in \overline{T}$. For this purpose we write an analogous equality to  (\ref{face_part_v}) but for the hyperface ${\cal A}\times {\cal A}\times u$, $u\in \overline{T}$:  \begin{equation}\label{face_part_u}|{\cal A}\times {\cal A}\times u \cap C|=|R \times {\cal A}\times u \cap C^2|+|{\cal A}\times \overline{S}\times u \cap C^3|=q| R\times u \cap D^2|+|\overline{R}\times \overline{S}\times u \cap C^3|.\end{equation}
 The  equalities  (\ref{eq_C3}), (\ref{face_part_v}), and (\ref{face_part_u}) imply that $|S\times v\cap D^1|=|R\times u \cap D^2|$, i.e., $D^1$ and $D^2$ are both $(a,*)$-stochastic. By permuting positions and renaming $D^1$, $D^2$, $D^3$, we obtain that $D^1$ and $D^3$ are $(*,b)$-, $(b,*)$-stochastic,  $D^2$ and $D^3$ are $(*,c)$-, $(*,c)$-stochastic. Therefore,
$D^1$, $D^2$, and $D^3$ are $(a,b)$-stochastic, $(a,c)$-stochastic, and
$(b,c)$-stochastic
respectively.

\end{proof}

From Lemma  \ref{l:2}.2 we obtain the following.

\begin{theorem}\label{t:char}
A CRC code    in $H(3,q)$, $q\geq 2$, with $\rho=1$ fulfills strong clique property if and only if it is  obtained by Construction D.
\end{theorem}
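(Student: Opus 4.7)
The plan is to prove both directions. The ``if'' direction is almost immediate: a code $C = C^1 \cup C^2 \cup C^3$ produced by Construction D is by construction a disjoint union of cliques of codirections $1, 2, 3$, with $C^i$ consisting of one clique of codirection $i$ for every point of $D^i$. The three supports ${\cal A} \times S \times T$, $R \times {\cal A} \times \overline{T}$, and $\overline{R} \times \overline{S} \times {\cal A}$ are pairwise disjoint in at least one coordinate, which disposes of disjointness of the $C^i$. Since Condition 1 forces $a, b, c > 0$, none of $D^1, D^2, D^3$ is empty, so $C$ contains cliques of all three codirections. Together with Theorem \ref{T:orthg const}, this gives the ``if'' direction.

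For the converse, I would take a CRC $C$ with $\rho = 1$ having the strong clique property and decompose $C = C^1 \cup C^2 \cup C^3$ by codirection; by hypothesis each $C^i$ is nonempty. Choosing minimal sets $R, S, T \subseteq {\cal A}$ satisfying (\ref{Cleq:1})--(\ref{Cleq:3}) and feeding this data into Lemmas \ref{l:1} and \ref{l:2}, I obtain $R' = \overline{R}$, $S' = \overline{S}$, $T' = \overline{T}$, together with sets $D^1 \subseteq S \times T$, $D^2 \subseteq R \times \overline{T}$, $D^3 \subseteq \overline{R} \times \overline{S}$ and positive integers $a, b, c$ making $D^1, D^2, D^3$ respectively $(a,b)$-, $(a,c)$-, and $(b,c)$-stochastic in the appropriate Cartesian products.

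Applying the stochastic relation (\ref{c:stochastic}) to each of the three graphs then yields the equalities $at = bs$, $a(q-t) = cr$, $b(q-s) = c(q-r)$, which are the first three equations of Condition 1. The positivity $0 < r, s, t < q$ follows from the strong clique property, because each of the six blocks $R, \overline{R}, S, \overline{S}, T, \overline{T}$ must be nonempty in order to carry at least one clique from some $C^i$; the bounds $a \leq \min\{r,s\}$, $b \leq \min\{t, q-r\}$, $c \leq \min\{q-s, q-t\}$ are just the interpretation of $a, b, c$ as numbers of elements of each $D^i$ in a maximal clique of the relevant size. Since $C^i$ is recovered from $D^i$ exactly as in Construction D (by adding the nonessential coordinate), $C$ coincides with the code produced by Construction D from the extracted data $(R, S, T, D^1, D^2, D^3)$, and the computation in the proof of Theorem \ref{T:orthg const} then yields $\gamma = a + b + c$, the last equation of Condition 1.

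Lemmas \ref{l:1} and \ref{l:2} already do the heavy lifting, so I do not expect a serious conceptual obstacle. The main care will be bookkeeping: matching which of $a, b, c$ corresponds to which pair of clique sizes across $D^1, D^2, D^3$, and orienting the three stochastic identities so that they assemble precisely into Condition 1 in the stated form.
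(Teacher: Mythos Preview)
Your proposal is correct and follows essentially the same route as the paper: the paper's proof is the single sentence ``From Lemma~\ref{l:2}.2 we obtain the following,'' and your argument is precisely the unpacking of that sentence, together with an explicit (and necessary) verification of the ``if'' direction and of the inequalities in Condition~1 that the paper leaves implicit. Your bookkeeping of which of $a,b,c$ attaches to which pair of clique sizes matches the conventions in Construction~D and Lemma~\ref{l:2}, and the stochastic identity~(\ref{c:stochastic}) does indeed assemble into the three equations of Condition~1 as you describe.
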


\begin{remark}
 It can be shown that the CRCs of $H(3,q)$ fulfilling the clique property but not strong clique property are exactly the codes with $\lambda_1(3,q)$ (characterized in \cite{Mey}) and the codes with $\lambda_2(3,q)$ obtained by permutation switching construction \cite{MV}. We  skip the proof of this fact for the sake of brevity.     
\end{remark}

\section{CRCs with the second eigenvalue and small odd $\gamma$}
\label{S:Deriv}
For a set $C$ of vertices of $H(3,q)$, we denote by $\chi_C$ its characteristic function.

Let $i\in \{1,2,3\}$ and $u$, $v$ be symbols of ${\cal A}$. Consider  tuples $x$ and $y$  from ${\cal A}^3$ that coincide in all positions except $i$th, $x_i=u$ and $y_i=v$. Denote by $x'$ the tuple  obtained from $x$ by deleting the $i$th position. Consider the  function $(\chi_C)_{i,u,v}$ defined as follows: 
\begin{equation}
\label{eq_star}(\chi_C)_{i,u,v}(x')= \chi_C(x)-\chi_C(y).\end{equation}

The function $(\chi_C)_{i,u,v}$ with domain ${\cal A}^2$ takes  values $\{0,1,-1\}$. If $C$ is a completely regular code in $H(3,q)$ with covering radius $1$ and the second eigenvalue, then  the function $(\chi_C)_{i,u,v}$ is an eigenvector of $H(2,q)$ with  eigenvalue $\lambda_2(2,q)$  \cite{MV} and, moreover,  these functions are characterized. We  note that a generalization  for this fact holds for all CRCs in $H(n,q)$  regardless of their eigenvalues,  see \cite{V1}  and \cite[Corollary 1]{MV}.

Let $X, Y$ be disjoint nonempty subsets of $
{\cal A}$,  $|X|=|Y|$, and $j$ be  $1$ or $2$. Consider the following function $f$, defined on ${\cal A}^2$, which we call the $(X,Y,j)$-{\it 
string}:

$$f(x)=
\begin{cases}
    1, & x_j \in X \\
    -1, & x_j \in Y \\
    0, & \hbox{otherwise}
\end{cases}.$$
In case when we are not interested in $X,Y,j$,
we will abbreviate this notation and refer to this function as a {\it string}.

Given  two nonempty proper  subsets $X,
Y$, $|X|=|Y|$ of  ${\cal A}$, define the following function $f$ on ${\cal A}^2$, which we call the
$(X,Y)$-{\it  cross}:
$$f(x)=
\begin{cases}
    1, & x_1 \in X, x_2 \notin Y\\
    -1, & x_1 \notin X, x_2 \in Y  \\
    0, & \hbox{otherwise}
\end{cases}.$$
In case when we are not interested in  $X$, $Y$, we will abbreviate this notation and refer to this function as a {\it cross}. Both functions were introduced earlier in \cite{MV}. 

\begin{lemma}\label{l:orig_deriv}\cite[Corollary 1, Lemma 4]{MV}
Let $C$ be a completely regular code in $H(3,q)$ with $\rho=1$ and eigenvalue $\lambda_2(3,q)$. Then for every $i\in \{1,2,3\}$ and $u$, $v\in {\cal A}$ the function $(\chi_C)_{i,u,v}$ is a cross, a string or
the all-zero function.

\end{lemma}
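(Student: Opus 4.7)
My plan is to leverage the spectral decomposition of $\chi_C$, compute the derivative $(\chi_C)_{i,u,v}$ using the tensor-product structure of Hamming-graph eigenspaces, and then classify $\{-1,0,1\}$-valued functions of the resulting form.

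First I would invoke the fact that, since the intersection matrix of $C$ has only the graph valency $k=3(q-1)$ and $\lambda_2(3,q)$ as its eigenvalues, the characteristic function admits the decomposition $\chi_C = c\mathbf{1} + \phi$ with $c=|C|/q^3$ and $\phi$ lying in the $\lambda_2(3,q)$-eigenspace of $H(3,q)$. From the standard product description of eigenspaces of Hamming graphs,
\begin{equation*}
\phi(x_1,x_2,x_3) = f_{12}(x_1,x_2) + f_{13}(x_1,x_3) + f_{23}(x_2,x_3),
\end{equation*}
where each $f_{jk}$ has zero sum in each of its two arguments separately. Taking $i=3$ for concreteness (the other cases being symmetric),
\begin{equation*}
(\chi_C)_{3,u,v}(x_1,x_2) = \phi(x_1,x_2,u) - \phi(x_1,x_2,v) = h_1(x_1) + h_2(x_2),
\end{equation*}
where $h_j(x_j) := f_{j3}(x_j,u) - f_{j3}(x_j,v)$ is automatically mean-zero and the bivariate term $f_{12}$ cancels out.

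It then suffices to classify $\{-1,0,1\}$-valued functions on ${\cal A}^2$ of the shape $h_1(x_1) + h_2(x_2)$ with both summands mean-zero. If $h_1 \equiv 0$ (symmetrically $h_2 \equiv 0$), one obtains a string in the corresponding direction or the zero function. Otherwise assume both $h_j$ are nonzero. A short range argument shows each $h_j$ takes exactly two distinct values: three distinct values $\alpha_1 > \alpha_2 > \alpha_3$ of $h_1$ would, upon adding any $\beta$ in the range of $h_2$, yield three distinct integer shifts in $\{-1,0,1\}$, forcing $\alpha_1 - \alpha_3 = 2$; but then $\alpha_2 + \beta$ would lie strictly between two consecutive integers, a contradiction. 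Writing $h_1 \in \{(q-r)/q,\,-r/q\}$ and $h_2 \in \{(q-s)/q,\,-s/q\}$ with $r,s$ the sizes of the positive level sets, demanding that all four possible sums lie in $\{-1,0,1\}$ forces $r + s = q$, and the function is then exactly an $(X,Y)$-cross with $|X|=|Y|=r$.

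The two spectral steps are essentially bookkeeping once one has the eigenspace decomposition of the Hamming graph. The substantive part is the value-set argument: one must both rule out ranges of size at least three and then match the parameters $r,s$ so that only the cross shape survives among two-valued $h_j$. This is elementary but is the actual content of the lemma.
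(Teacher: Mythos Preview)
The paper does not prove this lemma; it is quoted from \cite{MV} without argument, so there is no ``paper's proof'' to compare against. Your spectral approach is sound and is in fact the natural one: the decomposition $\chi_C=c\mathbf{1}+\phi$ with $\phi$ in the $\lambda_2$-eigenspace, the tensor-product description of that eigenspace as sums $f_{12}+f_{13}+f_{23}$ with zero marginals, and the cancellation leaving $(\chi_C)_{3,u,v}(x_1,x_2)=h_1(x_1)+h_2(x_2)$ with each $h_j$ mean-zero are all correct. The two-value analysis also goes through: once each $h_j$ is two-valued, the differences $p_1-p_2$ and $q_1-q_2$ are differences of elements of $\{-1,0,1\}$, hence positive integers summing to at most $2$, so both equal $1$; the mean-zero condition then pins the values and $p_1+q_1=1$ forces $r+s=q$, giving a cross with $|X|=|Y|$.

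The one step that fails as written is your exclusion of a three-valued $h_1$. After observing that for any $\beta$ in the range of $h_2$ the three shifts $\alpha_i+\beta$ must be exactly $\{-1,0,1\}$ (so $\alpha_1-\alpha_3=2$), you claim ``$\alpha_2+\beta$ would lie strictly between two consecutive integers''. That is false: $\alpha_2+\beta=0$ is an integer, and no contradiction arises on that ground. The correct finish is immediate, though: the equation $\alpha_2+\beta=0$ determines $\beta$ uniquely, so $h_2$ takes a single value, and being mean-zero it is identically zero, contradicting the hypothesis that $h_2$ is nonzero. With this repair your argument is complete.
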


\begin{lemma}\label{L1}
 Let $C$ be a CRC in $H(3,q)$ with $\rho=1$, eigenvalue $\lambda_2(3,q)$, and  
 $\gamma<q/2$. Then any two maximal cliques of $H(3,q)$ consisting only of vertices of $C$ are disjoint.
\end{lemma}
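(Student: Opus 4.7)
The plan is to argue by contradiction. Suppose there exist two distinct maximal cliques $K_1,K_2$ of $H(3,q)$ with $K_1,K_2\subseteq C$ and $K_1\cap K_2\neq\emptyset$. Since distinct maximal cliques of the same codirection in $H(3,q)$ are automatically disjoint (they arise from fixing the same two coordinates to different values), $K_1$ and $K_2$ must have different codirections. Two maximal cliques of different codirections (say codirections $1$ and $2$) that both meet share the fixed symbol in the third coordinate and intersect in a single vertex $z$.

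The key observation is then a neighbor count at the vertex $z$. Each of the $q-1$ vertices in $K_1\setminus\{z\}$ differs from $z$ only in position $1$, and each of the $q-1$ vertices in $K_2\setminus\{z\}$ differs from $z$ only in position $2$, so these two sets are disjoint and all lie in $C$. Hence $z$ has at least $2(q-1)$ neighbors in $C$, which gives $\alpha_0\geq 2(q-1)$ and therefore $\beta\leq 3(q-1)-2(q-1)=q-1$.

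Finally I plug this into the eigenvalue relation. Because $C$ has eigenvalue $\lambda_2(3,q)=q-3$, Proposition~\ref{P:1} yields $\gamma+\beta=3(q-1)-(q-3)=2q$. Combining with $\beta\leq q-1$ gives $\gamma\geq q+1$, which contradicts the hypothesis $\gamma<q/2$. Thus no two maximal cliques in $C$ can share a vertex.

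I do not expect a real obstacle here: the argument is entirely elementary and uses only the basic intersection-number relation from Proposition~\ref{P:1} together with the structure of maximal cliques in $H(3,q)$. The cross/string machinery developed earlier in the section and Lemma~\ref{l:orig_deriv} are not needed for this particular lemma, although they are presumably what the following results of Section~\ref{S:Deriv} will build on.
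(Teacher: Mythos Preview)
Your proof is correct and follows essentially the same route as the paper's: both obtain $\alpha_0\geq 2(q-1)$ from the common vertex of two maximal cliques in $C$ and derive a contradiction via Proposition~\ref{P:1} and the eigenvalue relation $\gamma+\beta=2q$. The paper phrases the endgame as $\alpha_0=\lambda_2(3,q)+\gamma<3q/2-3$, while you equivalently pass through $\beta\leq q-1$ to get $\gamma\geq q+1$; the arguments are the same up to this cosmetic difference, and your added remark on why the cliques must have distinct codirections makes the bound $\alpha_0\geq 2(q-1)$ fully explicit.
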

\begin{proof}
Indeed, if two such cliques exist, then the intersection parameter $\alpha_0$ for the CRC $C$ is greater than $2q-2$. But due to Proposition \ref{P:1}, $\alpha_0$ is $\lambda_2(3,q)+\gamma$, which is less than $q-3+q/2=3q/2-3$, a contradiction.

\end{proof}

We now consider two  structural lemmas.
\begin{lemma}\label{l:string}
Let $C$ be a completely regular code in $H(3,q)$ with $\rho=1$ and  eigenvalue $\lambda_2(3,q)$, $(x_1,x_2,u)$ be in $C$. If there is $v\in {\cal A}$ such that $(\chi_C)_{3,u,v}$ is a string and $(\chi_C)_{3,u,v}(x_1,x_2)=1$, then $(x_1,x_2,u)$ is contained in a maximal clique of $H(3,q)$ solely of vertices of $C$. 
\end{lemma}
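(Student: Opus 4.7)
The plan is to directly unwind the definitions. The function $(\chi_C)_{3,u,v}$ is assumed to be a string, so it is an $(X,Y,j)$-string for some disjoint $X,Y\subseteq {\cal A}$ of equal size and some $j\in\{1,2\}$. I will simply read off the consequences of this form at the vertex $(x_1,x_2)\in {\cal A}^2$ and at its neighbors in $H(2,q)$.

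First, since $(\chi_C)_{3,u,v}(x_1,x_2)=1$, the definition of a string forces $x_j\in X$. Take $j=1$ without loss of generality; the case $j=2$ is identical after permuting the two remaining coordinates of $H(3,q)$. Then the value of the string at $(x_1,y)$ still depends only on $x_1\in X$ and therefore equals $1$ for every $y\in {\cal A}$. Recalling the definition (\ref{eq_star}) that
$$(\chi_C)_{3,u,v}(x_1,y)=\chi_C(x_1,y,u)-\chi_C(x_1,y,v),$$
I conclude $\chi_C(x_1,y,u)=1$ for every $y\in {\cal A}$.

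Hence the entire set $\{(x_1,y,u):y\in {\cal A}\}$ lies inside $C$. This is a maximal clique of $H(3,q)$ (of codirection $2$) and it contains $(x_1,x_2,u)$, as required. There is no real obstacle; the proof is a short consequence of parsing the string definition, and the symmetric case $j=2$ produces instead a maximal clique of codirection $1$ inside $C$, which is equally fine. Note that the assumption on the eigenvalue $\lambda_2(3,q)$ is used only implicitly, through the hypothesis that $(\chi_C)_{3,u,v}$ is a string (cf.\ Lemma \ref{l:orig_deriv}).
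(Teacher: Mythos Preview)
Your proof is correct and is essentially identical to the paper's: both fix $j=1$ without loss of generality, deduce $x_1\in X$ from the string definition, observe that the string then equals $1$ on all of $\{x_1\}\times{\cal A}$, and read off from (\ref{eq_star}) that $x_1\times{\cal A}\times u\subset C$.
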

\begin{proof}  
Let $(\chi_C)_{3,u,v}$ be the $(X,Y,1)$-string for some $X$ and $Y$ (the result for $(X,Y,2)$ is obtained by the transposition of two positions). Firstly, given that  $(\chi_C)_{3,u,v}(x_1,x_2)=1$, by the definition of string, we obtain that $x_1\in X$, and furthermore $(\chi_C)_{3,u,v}(x_1,w)=1$ for any $w\in {\cal A}$. 
Secondly, by the definition (\ref{eq_star}) of the function $(\chi_C)_{3,u,v}$, we see that 
$$(\chi_C)_{3,u,v}(x_1,w)=\chi_C(x_1,w,u)-\chi_C(x_1,w,v)=1.$$

We see that $(x_1,x_2,u)$ is in the clique  $x_1 \times {\cal A} \times u$ solely of vertices of $C$. 

\end{proof}

\begin{lemma}\label{l:cross}
Let $C$ be a CRC in $H(3,q)$ with $\rho=1$, the eigenvalue $\lambda_2(3,q)$, and $\gamma<q/2$. If there are  $(x_1,x_2,u)\in C$ and $v\in {\cal A}$ such that $(\chi_C)_{3,u,v}$ is a cross and $(\chi_C)_{3,u,v}(x_1,x_2)=1$, then $(x_1,x_2,u)$ is contained in a maximal clique solely of vertices of $C$. 
\end{lemma}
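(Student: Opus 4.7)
The plan is to argue by contradiction with the help of Lemma \ref{l:orig_deriv}, which describes every derivative $(\chi_C)_{i,u',v'}$ of $\chi_C$ as a cross, a string, or the zero function. First, unpacking the $(X,Y)$-cross hypothesis on $(\chi_C)_{3,u,v}$ and the condition $(\chi_C)_{3,u,v}(x_1,x_2)=1$ forces $x_1\in X$ and $x_2\in\overline{Y}$, and the three regions of the cross give
\begin{equation*}
\{x_1\}\times\overline{Y}\times\{u\}\subseteq C,\quad X\times\{x_2\}\times\{u\}\subseteq C,\quad \overline{X}\times Y\times\{v\}\subseteq C,
\end{equation*}
while $(x_1,x_2,v)\notin C$. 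The last of these rules out the codirection-$3$ maximal clique through $(x_1,x_2,u)$, so if the conclusion fails there must exist $y_1^{\ast}\in\overline{X}$ and $y_2^{\ast}\in Y$ with $(y_1^{\ast},x_2,u),(x_1,y_2^{\ast},u)\notin C$. The vanishing of the cross on $X\times Y$ and $\overline{X}\times\overline{Y}$ then propagates these to $(x_1,y_2^{\ast},v),(y_1^{\ast},x_2,v)\notin C$, and its $-1$-region gives $(y_1^{\ast},y_2^{\ast},u)\notin C$ and $(y_1^{\ast},y_2^{\ast},v)\in C$.

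I then apply Lemma \ref{l:orig_deriv} to the derivative $(\chi_C)_{1,x_1,y_1^{\ast}}$ on $H(2,q)$. A direct evaluation at $(x_2,u),(x_2,v),(y_2^{\ast},u),(y_2^{\ast},v)$ yields the values $1,0,0,-1$. These exclude the zero function and both kinds of strings (a $j=1$-string would require equal values at $(x_2,u)$ and $(x_2,v)$, a $j=2$-string would require equal values at $(x_2,u)$ and $(y_2^{\ast},u)$), so the derivative is an $(X',Y')$-cross with $x_2\in X'$, $y_2^{\ast}\notin X'$, $u\notin Y'$, $v\in Y'$. Unpacking this cross produces
\begin{equation*}
\{x_1\}\times X'\times\overline{Y'}\subseteq C,\quad \{y_1^{\ast}\}\times\overline{X'}\times Y'\subseteq C.
\end{equation*}
A symmetric analysis of $(\chi_C)_{2,x_2,y_2^{\ast}}$ gives another cross $(X'',Y'')$ with $x_1\in X''$, $y_1^{\ast}\notin X''$, $u\notin Y''$, $v\in Y''$, and the inclusions $X''\times\{x_2\}\times\overline{Y''}\subseteq C$ and $\overline{X''}\times\{y_2^{\ast}\}\times Y''\subseteq C$.

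The final step is a counting argument based on Proposition \ref{p:OA}.1, which asserts that every hyperface of $H(3,q)$ contains exactly $q\gamma/2$ vertices of $C$. In the hyperface $\{x_1\}\times{\cal A}\times{\cal A}$, the inclusions from the two crosses involving $x_1$ together contribute at least $(q-|Y|)+|X'|(q-|Y'|)-|X'\cap\overline{Y}|$ vertices of $C$ (the overlap is confined to the column $\{x_1\}\times(X'\cap\overline{Y})\times\{u\}$). The dual count for ${\cal A}\times\{x_2\}\times{\cal A}$ uses the original cross and the $(X'',Y'')$-cross. The strict inequality $\gamma<q/2$ forces $|X|,|X'|,|X''|$ to differ from $q/2$ (since each of $|X|(q-|X|),|X'|(q-|X'|),|X''|(q-|X''|)$ must be at most $q\gamma/2<q^2/4$), and combined with the hyperface bound $q\gamma/2<q^2/4$ the accumulated lower bounds should exceed $q\gamma/2$ in at least one of the considered hyperfaces, the sought contradiction.

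The main obstacle I anticipate is the bookkeeping in this last counting: the sets $X',Y',X'',Y''$ are not a priori tied to $X,Y$, so closing the estimate may require iterating the derivative construction (peeling off further witnesses from the complements of $X'$ and $X''$ and producing additional crosses) or splitting into cases according to whether $|X|<q/2$ or $|X|>q/2$. In every branch the hypothesis $\gamma<q/2$ is essential: it simultaneously bounds the mass of each hyperface of $C$ and excludes the symmetric size $q/2$ for the cross sets, which would otherwise permit a degenerate configuration.
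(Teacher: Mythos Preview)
Your proof is incomplete by your own admission, and the route you chose is far more elaborate than necessary. The paper's argument is a two-line counting, and in fact you already produced the exact witnesses it needs.

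After you note that the $(X,Y)$-cross forces $X\times\overline{Y}\times\{u\}\subseteq C$ (not merely the two lines $\{x_1\}\times\overline{Y}\times\{u\}$ and $X\times\{x_2\}\times\{u\}$, though you only stated those), your vertices $(x_1,y_2^{\ast},u)\in (X\times Y\times\{u\})\cap\overline{C}$ and $(y_1^{\ast},x_2,u)\in(\overline{X}\times\overline{Y}\times\{u\})\cap\overline{C}$ are precisely what is needed. At this point you should simply count neighbours in $C$ using the intersection number $\gamma$ directly, rather than passing to hyperface sizes via Proposition~\ref{p:OA}.1. The vertex $(x_1,y_2^{\ast},u)\notin C$ is adjacent to every vertex of $\{x_1\}\times\overline{Y}\times\{u\}\subseteq C$, hence $\gamma\geq|\overline{Y}|=q-|Y|$. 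The vertex $(y_1^{\ast},x_2,u)\notin C$ is adjacent to every vertex of $X\times\{x_2\}\times\{u\}\subseteq C$, hence $\gamma\geq|X|$. Since $|X|=|Y|$, this yields $\gamma\geq\max(|X|,q-|X|)\geq q/2$, contradicting $\gamma<q/2$.

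All of the machinery involving the additional derivatives $(\chi_C)_{1,x_1,y_1^{\ast}}$ and $(\chi_C)_{2,x_2,y_2^{\ast}}$, the extra crosses $(X',Y')$, $(X'',Y'')$, and the hyperface-mass inequalities is unnecessary, and as you noticed it does not close without further case analysis. The missing idea is simply to use $\gamma$ itself (the neighbour count for a vertex of $\overline{C}$) rather than the aggregate hyperface count $q\gamma/2$.
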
\begin{proof}
Let the function $(\chi_C)_{3,u,v}$ be the  $(X,Y)$-cross for some $X$ and $Y$, $|X|=|Y|$.   Since the cross  $(\chi_C)_{3,u,v}$ has value $1$ only on $X\times \overline{Y}$ we obtain that 

\begin{equation}\label{e:inc}X\times \overline{Y}\times u \subset C.\end{equation}
Moreover, since $(\chi_C)_{3,u,v}(x_1,x_2)=1$,  we see that $x_1\in X$, $x_2\in \overline{Y}$.

We now show that  $X\times Y\times u \subset C$ or $\overline{X}\times \overline{Y}\times u \subset C$ which, being combined with (\ref{e:inc}), will imply the desired. 
Suppose the opposite, i.e., there are $y\in (X\times Y\times  u) \cap \overline{C}$ and $z\in (\overline{X}\times \overline{Y}\times  u) \cap\overline{C}$. 

Due to (\ref{e:inc}), the vertex $y\in  X\times Y\times u $ is adjacent to at least $|{\overline Y}|$ vertices of $C$ belonging to $X\times \overline{Y}\times u$. The vertex $z\in \overline{X}\times \overline{Y}\times u$ is adjacent to at least $|X|$ vertices of $C$ from $X\times \overline{Y}\times u$. Since both $y$ and $z$  do not belong to $C$,  we conclude that $\gamma\geq max(|X|,q-|Y|)=max(|X|,q-|X|)\geq q/2$, which contradicts the condition $\gamma<q/2$ of the lemma.

\end{proof}

\begin{theorem}\label{T:smalldeg}
Let $C$ be a  completely regular code in $H(3,q)$, $q\geq 2$, with $\rho=1$, eigenvalue $\lambda_2(3,q)$ and odd $\gamma$, $\gamma<q/2$. Then $C$ fulfills strong clique property and therefore is obtained by Construction D.
\end{theorem}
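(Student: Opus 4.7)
The plan is to prove strong clique property in two stages — first showing that every vertex of $C$ lies in a maximal clique contained in $C$, then verifying that cliques of all three codirections occur — and to conclude by invoking Theorem~\ref{t:char}.

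For the first stage, I would fix a vertex $(x_1,x_2,x_3)\in C$ and analyze the $q-1$ derived functions $(\chi_C)_{3,x_3,v}$ for $v\in{\cal A}\setminus\{x_3\}$. By Lemma~\ref{l:orig_deriv} each such function is a string, a cross, or identically zero, and since $(x_1,x_2,x_3)\in C$ its value at $(x_1,x_2)$ is $1-\chi_C(x_1,x_2,v)\in\{0,1\}$. I would then split into two cases. If some $v$ produces a string or cross with value $1$ at $(x_1,x_2)$, then Lemma~\ref{l:string} or Lemma~\ref{l:cross} (the latter relying on $\gamma<q/2$) places $(x_1,x_2,x_3)$ in a maximal clique of codirection $1$ or $2$ entirely in $C$. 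Otherwise every derived function is either zero or vanishes at $(x_1,x_2)$, so $(x_1,x_2,v)\in C$ for all $v$, and the entire codirection-$3$ clique $x_1\times x_2\times{\cal A}$ is in $C$. Together with Lemma~\ref{L1}, which forbids two distinct maximal cliques of $C$ from sharing a vertex when $\gamma<q/2$, this yields that $C$ is a disjoint union of maximal cliques.

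The crucial second stage uses the odd parity of $\gamma$. Let $N_i$ be the number of codirection-$i$ cliques contained in $C$. Integrality of $|C|=q^2\gamma/2$ (Proposition~\ref{P:1}) combined with odd $\gamma$ forces $q$ to be even. For a hyperface $H$ of direction $1$, each codirection-$1$ clique in $C$ contributes one vertex to $H$, while any clique of other codirection in $C$ either lies fully in $H$ (contributing $q$) or misses $H$ entirely, so $|H\cap C|\equiv N_1\pmod q$. Proposition~\ref{p:OA}.1 gives $|H\cap C|=\gamma q/2$, and because $\gamma$ is odd and $q$ is even, $\gamma q/2\equiv q/2\pmod q$; hence $N_1\geq q/2\geq 1$, and by permuting coordinates $N_2,N_3\geq 1$ as well. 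Theorem~\ref{t:char} then identifies $C$ as a code from Construction~D and finishes the proof.

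The hardest part, I expect, will be the case analysis of the derived functions in the first stage — in particular, making sure that in the ``degenerate'' subcase where every $(\chi_C)_{3,x_3,v}$ either vanishes identically or takes value $0$ at $(x_1,x_2)$ one truly recovers the whole codirection-$3$ clique. By contrast, the second stage is a short modular arithmetic observation in which odd $\gamma$ is precisely the ingredient that compels $N_i\geq q/2>0$ and thereby upgrades the ordinary clique property to the strong one.
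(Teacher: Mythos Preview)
Your proposal is correct and follows essentially the same approach as the paper: the paper uses the identical case analysis via Lemmas~\ref{l:orig_deriv}, \ref{l:string}, \ref{l:cross}, and~\ref{L1} to establish the clique property, and then (by contradiction rather than directly) observes that if some codirection were absent, a suitable hyperface would meet $C$ in a multiple of $q$, contradicting $|H\cap C|=\gamma q/2$ with $\gamma$ odd. Your direct modular formulation additionally yields the slightly sharper $N_i\geq q/2$, but the underlying idea is the same.
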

\begin{proof}
 We show that $C$ fulfills  clique property. Let  $x= (x_1,x_2,u)$ be a vertex in $C$.
We now consider possible cases for the value of $(\chi_C)_{3,u,v}(x_1,x_2)$. Due to definition (\ref{eq_star}) and $x$ being in $C$, the  value $(\chi_C)_{3,u,v}(x_1,x_2)$ is either $0$ or $1$. 
Consider several cases. 

Case a. Suppose that for all $v\in {\cal A}$ 
we have that
$(\chi_C)_{3,u,v}(x_1,x_2)=0$. By definition of the function $(\cdot)_{3,u,v}$, see (\ref{eq_star}), we get that $\chi_C(x_1,x_2,u)-\chi_C(x_1,x_2,v)=0$, i.e., $x$ is in the clique $x_1\times x_2\times {\cal A}$ solely of vertices in $C$.

Case b. Assume that for some $v \in {\cal A}$ we have $(\chi_C)_{3,u,v}(x_1,x_2)=1$. We see that the function $(\chi_C)_{3,u,v}$ is not all-zero, so by Lemma \ref{l:orig_deriv} it is a string or a cross.
In both cases  from Lemma \ref{l:string} and Lemma \ref{l:cross} we see that $x$ is contained in a clique solely of the vertices from $C$. 

Thus, we proved that any $x\in C$ is contained in a clique of vertices from $C$.
By Lemma \ref{L1}, there is a unique such clique and we obtain that $C$ consists of disjoint cliques. 

 Suppose that  strong clique property does not hold for the code $C$. Up to a permutation of coordinate positions, let $C$ be either a union of disjoint maximum cliques having direction $1$ or a union of disjoint maximum cliques having directions $1$ and $2$. 
For any $v\in  {\cal A}$ consider the set $\{x:x \in C, x_3=v \}$. By our hypothesis, this set consists of $t$ cliques having  directions $1$ or $2$ and has size $tq$.   However, by Proposition \ref{p:OA},  the size of $\{x:x\in C, x_3=v \}$ is $\gamma q/2$. We conclude that $\gamma$ is even, which contradicts the conditions of the theorem.

A description of the code by Construction D follows from Theorem \ref{t:char}.

\end{proof}

\section{The parameters of CRCs with covering radius $1$ and the second eigenvalue in $H(n,q)$}\label{S:CRCsparam}

\begin{theorem}\label{C:H3q} For any $q\geq 2$,
a CRC with $\rho=1$,  eigenvalue $\lambda_2(3,q)$,  and $\gamma$ in $H(3,q)$
 exists if and only if one of the following holds:

1.  $\gamma$ is even,  $2\leq \gamma\leq q$,

2. $q$ is even, $\frac{q}{2}\leq \gamma\leq q$, 

3. $q$ is even, $\gamma$ is odd, $\gamma<\frac{q}{2}$, and Condition 1    holds for $q$ and $\gamma$.

\end{theorem}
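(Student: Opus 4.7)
The plan is to split the argument into existence and necessity, and case-analyze by the parities of $q$ and $\gamma$. The framework is set by Proposition \ref{P:1}: every CRC with $\rho=1$ and eigenvalue $\lambda_2(3,q)$ satisfies $\gamma+\beta=2q$, and the standing convention $\gamma\leq \beta$ forces $\gamma\leq q$. Thus there are only finitely many candidate values of $\gamma$ to consider for each $q$, and the theorem amounts to saying exactly which of them are realized.

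For existence, I would assemble the four constructions of Section \ref{S:Constr} as follows. When $\gamma$ is even with $2\leq \gamma\leq q$ (case 1), Construction A (Proposition \ref{Construction:A}) provides a CRC. When $q$ is even and $\gamma$ is odd in the range $q/2\leq \gamma\leq q$ (case 2), I subdivide: for $\gamma=q/2$, which forces $q\equiv 2\pmod{4}$, Construction B supplies the code, and for $q/2<\gamma<q$ Construction C does; the value $\gamma=q$ cannot be odd since $q$ is even. Case 3 is exactly the content of Theorem \ref{T:orthg const} (Construction D).

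For necessity, suppose such a CRC exists. If $q$ is odd, Proposition \ref{p:odd q} forces $\gamma$ to be even, placing the code in case 1. If $q$ is even and $\gamma$ is even, case 1 again applies. If $q$ is even and $\gamma$ is odd, then either $\gamma\geq q/2$ (placing the code in case 2) or $\gamma<q/2$; in the latter subcase Theorem \ref{T:smalldeg} applies and shows that the code arises from Construction D, so Condition 1 holds for $(q,\gamma)$ and case 3 is satisfied. Since every step appeals to a previously established construction or characterization, the argument is essentially a careful bookkeeping exercise; the only minor subtlety worth flagging is the boundary value $\gamma=q/2$ with $q\equiv 2\pmod{4}$, where Construction C (which requires the strict inequality $\gamma>q/2$) is unavailable and Construction B must be invoked instead. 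I do not expect any genuine obstacle — the heavy lifting was done in Sections \ref{S:Constr}--\ref{S:Deriv}.
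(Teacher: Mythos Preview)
Your proposal is correct and follows essentially the same approach as the paper: Proposition \ref{p:odd q} handles odd $q$, Constructions A, B, C give existence for even $q$ in the ranges claimed, and Theorem \ref{T:smalldeg} supplies the necessity of Condition 1 when $\gamma$ is odd and below $q/2$. Your write-up is simply more explicit than the paper's very terse version, and your remark about the boundary $\gamma=q/2$ needing Construction B rather than C is a correct observation the paper leaves implicit.
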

\begin{proof}
For odd $q$ the result follows from  Proposition \ref{p:odd q}.

Let $q$ be even. The existence of all CRCs with desired eigenvalue and positive even $\gamma$ or  $\frac{q}{2}\leq \gamma\leq q$ follows from Constructions A, B, C. By Theorem \ref{T:smalldeg} all CRCs with odd $\gamma$ less then $\frac{q}{2}$ are obtained by Construction D. In terms of parameters, it is equivalent to Condition 1.

\end{proof}

We note that there are instances of nonisomorphic completely regular codes with the same parameters in $H(3,q)$, see Remark \ref{r:2}. The characterization of completely regular codes up to isomorphism is still open in general and we have only partial results in Theorem \ref{T:smalldeg}. Given the characterization of completely regular codes with $\rho=1$ with the first \cite{Mey} and the third eigenvalue of $H(3,q)$ (see Proposition \ref{p:OA}.2), we get the following.  

\begin{theorem}
For any $q\geq 2$, a completely regular code $C$ in $H(3,q)$ with covering radius $1$ and  $\gamma$ exists if and only if one of the following holds.

1.\cite{Mey} The code $C$ has the eigenvalue $\lambda_1(3,q)$ and any $\gamma$, $0<\gamma\leq q/2$.

2. The code $C$ has the eigenvalue $\lambda_2(3,q)$, $q$, and $\gamma$ are described in Theorem \ref{C:H3q}.

3. The code $C$ has the  eigenvalue $\lambda_3(3,q)$ and any nonzero $\gamma$ divisible by $3$.

\end{theorem}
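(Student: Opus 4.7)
The statement is essentially a consolidation of three independent characterizations, one for each possible non-trivial eigenvalue of a CRC with $\rho=1$ in $H(3,q)$. The plan is to observe first that, by Proposition~\ref{P:1}, the second eigenvalue $k-(\gamma+\beta)$ of the intersection matrix of such a code must be an eigenvalue of $H(3,q)$ strictly smaller than the valency, hence equal to $\lambda_i(3,q)$ for some $i\in\{1,2,3\}$. This cleanly partitions the codes into three disjoint classes, and it suffices to describe the admissible $\gamma$ in each class.

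For the eigenvalue $\lambda_1(3,q)=2q-3$, the equality $\gamma+\beta=k-\lambda_1(3,q)=q$ forces $|C|=q^2\gamma$, and Meyerowitz's classification~\cite{Mey} identifies the codes (up to permutation of coordinates) as $B\times{\cal A}\times{\cal A}$ for a proper nonempty subset $B\subseteq{\cal A}$; one reads off $\gamma=|B|$, and the standing assumption $|C|\leq|V(\Gamma)|/2$ translates into $1\leq\gamma\leq q/2$. For the eigenvalue $\lambda_2(3,q)$, the classification up to parameters is the content of Theorem~\ref{C:H3q} and can be cited verbatim. For the eigenvalue $\lambda_3(3,q)=-3$, Proposition~\ref{p:OA}.2 asserts that $C$ is a CRC of this type iff every maximal clique of $H(3,q)$ meets $C$ in the same number of vertices, and existence is equivalent to $3\mid\gamma$.

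Assembling these three descriptions yields the three cases of the theorem. There is no genuine obstacle since each of the three classes has already been treated in the paper or in the cited references; the only bookkeeping is to verify in Case~1 that the parameter range $0<\gamma\leq q/2$ indeed matches the convention $\gamma\leq\beta$ (equivalently $|C|\leq|V(\Gamma)|/2$) adopted throughout the paper, and to note that the three cases are mutually exclusive because $\lambda_1(3,q)$, $\lambda_2(3,q)$, $\lambda_3(3,q)$ are pairwise distinct for every $q\geq 2$.
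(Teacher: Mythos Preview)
Your proposal is correct and follows the same approach as the paper: the paper does not give a separate proof for this theorem but simply presents it as a consolidation of Meyerowitz's result for $\lambda_1(3,q)$, Theorem~\ref{C:H3q} for $\lambda_2(3,q)$, and Proposition~\ref{p:OA}.2 for $\lambda_3(3,q)$. Your added bookkeeping (the computation of $\gamma+\beta$ in each case, the verification that $0<\gamma\leq q/2$ in Case~1 corresponds to the convention $\gamma\leq\beta$, and the observation that the three eigenvalues are pairwise distinct) is accurate and makes explicit what the paper leaves implicit.
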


We visualize the results of this work and \cite{MV} in Table 3 and deduce the following. 
\begin{table}
\caption*{Table 3. Constructions of CRCs with $\rho=1$ in $H(n,q)$ with the second eigenvalue}
\begin{center}
\begin{tabular}{|c|c|c|c|c|c|c|c}
\hline
\backslashbox{Parameters}{Construction} &  A&  B&
  C&
   D&
   \makecell{ Permutation \\ switching }  & \makecell{ Alphabet \\ lifting }   \\
  
   \hline   

 $\gamma$ even, $2\leq \gamma\leq q$, any $q$ & $\star$ &  &+&+, Remark \ref{r:2}&$\oplus$&$\oplus$\\\hline
$\frac{q}{2}<\gamma<q$, $q$ even  &  &  &$\star$&+, Remark \ref{r:2}&&\\
\hline
$\gamma=\frac{q}{2}$, $q$ even  &  & $\star$ & &&&\\

\hline
$\gamma<\frac{q}{2}$, $\gamma$,  $q$ even  &  &  &&$\ostar$& &\\
\hline
\makecell{ The number of \\ essential positions } & 2 & 3 & 3 & 3 & $3\leq\cdot \leq \frac{q}{2} +1$&4\\
\hline

\end{tabular}
\label{tab1}
\end{center}

$+$ exists for some of  parameters\\
$\star$ exists for all parameters\\
$\ostar$ characterizes all CRCs  with these parameters \\
$\oplus$  characterizes all CRCs with at least $4$ essential positions.

\end{table}

\begin{theorem}
\label{T:Hnq}
For any  $n$ and $q\geq 2$ a completely regular code with covering radius $1$, eigenvalue $\lambda_2(n,q)$, and $\gamma$ exists in $H(n,q)$   if and only if one of the following holds:

1. $n\geq 2$ and $\gamma$ is even.

2. $n\geq 3$, $q$   is even, $\frac{q}{2}\leq \gamma\leq q$.

3. $n\geq 3$, $q$ is even, $\gamma$ is odd, $\gamma\leq q/2$, and Condition 1    holds for $q$ and $\gamma$.

\end{theorem}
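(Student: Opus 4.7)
The plan is to reduce the $n$-dimensional question to previously settled low-dimensional cases via Proposition \ref{P:3}, which states that adding or deleting nonessential positions preserves the covering radius, the intersection array, and the \emph{indices} of the eigenvalues. In particular, the property of having $\lambda_2$ as a nontrivial eigenvalue is preserved in both directions, as is the parameter $\gamma$. This means that the classification in $H(n,q)$ up to parameters is equivalent to classifying, up to parameters, the reduced codes in $H(m,q)$ for all $m\le n$ and taking the union.

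For the sufficiency direction, I would exhibit a code realizing the given parameters in the smallest Hamming graph where it lives, then lift by appending nonessential positions. In Case~1, with $n\ge 2$ and $\gamma$ even, Proposition \ref{P:2}.2 produces a CRC in $H(2,q)$ with $\rho=1$, eigenvalue $\lambda_2(2,q)=-2$, and the prescribed $\gamma$; appending $n-2$ nonessential positions yields the desired code in $H(n,q)$. In Cases~2 and~3 (both requiring $n\ge 3$), I would invoke Theorem \ref{C:H3q} to obtain a code in $H(3,q)$ with the required eigenvalue $\lambda_2(3,q)$ and the prescribed $\gamma$, then append $n-3$ nonessential positions.

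For the necessity direction, start with a CRC $C$ in $H(n,q)$ with $\rho=1$, eigenvalue $\lambda_2(n,q)$, and parameter $\gamma$, and iteratively delete nonessential positions via Proposition \ref{P:3} to obtain a reduced CRC $\tilde{C}$ in some $H(m,q)$ with $\rho=1$, eigenvalue $\lambda_2(m,q)$, and the same $\gamma$; since the eigenvalue index must remain meaningful, $m\ge 2$. The argument then splits on $m$. If $m=2$, then $\tilde{C}$ is a CRC in $H(2,q)$ with $\rho=1$ and eigenvalue $\lambda_2(2,q)=-2$, and Proposition \ref{P:2}.2 forces $\gamma$ to be even, giving Case~1. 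If $m=3$, Theorem \ref{C:H3q} provides exactly the three listed possibilities, landing in Case~1, 2, or~3. If $m\ge 4$, Theorem \ref{T:reduce} asserts that every such reduced code has only even entries in its intersection array; in particular $\gamma$ is even, returning to Case~1.

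The main thing to verify is that the reduction is truly tracking the hypothesis ``second eigenvalue'' — i.e.\ that the eigenvalue \emph{index} $2$ survives each coordinate deletion. This is exactly the content of Proposition \ref{P:3}, which preserves the set $I$ of eigenvalue indices. Once that is checked, the argument is a concatenation of the three regimes (m=2, m=3, m\ge 4) whose outputs fit together without overlap or gap: the $m\ge 4$ conclusion (even $\gamma$) is subsumed by Case~1, the $m=2$ conclusion is likewise Case~1, and the $m=3$ regime supplies the remaining Cases~2 and~3 directly. No further combinatorial work beyond invoking Theorems \ref{T:reduce} and \ref{C:H3q} and Proposition \ref{P:2}.2 is needed.
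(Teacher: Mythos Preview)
Your proposal is correct and follows essentially the same approach as the paper: reduce via Proposition~\ref{P:3} to a reduced code in $H(m,q)$, then split on $m=2$ (Proposition~\ref{P:2}.2 forces $\gamma$ even), $m=3$ (Theorem~\ref{C:H3q}), and $m\ge 4$ (Theorem~\ref{T:reduce} forces $\gamma$ even). The paper's proof is terser on the sufficiency direction, but your explicit lifting argument is exactly what is intended.
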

\begin{proof} Given a completely regular code $C$ in  $H(n,q)$ with $\rho=1$ and eigenvalue $\lambda_2(n,q)$,  consider the  reduced code $C'$ in $H(n'q)$
which is obtained from the 
original code by removing nonessential positions.
By Proposition \ref{P:3}, $C'$ 
 has  eigenvalue $\lambda_2(n',q)$, the same covering radius $1$  and, moreover, the same intersection array as $C$.
By Theorem \ref{T:reduce}, the reduced 
 codes with more than $3$ essential positions have even $\gamma$ and their  parameters are not new, because they are covered by those of Construction A.  
The reduced codes with $n'=2$ (arising from the grid graph $H(2,q)$) all have even $\gamma$ and are described in Construction A (the first statement of the theorem). For $n'=3$ we have the result due to Theorem \ref{C:H3q}.

\end{proof}

\end{document}